\documentclass[11pt]{article}
\usepackage{epsfig, latexsym, amsthm}
\usepackage{times,amssymb,amsmath}
\usepackage[usenames,dvipsnames]{xcolor}
\usepackage[text={16cm,23cm},centering]{geometry}
\usepackage{tikz}
\usetikzlibrary{matrix, calc, arrows}
\usepackage{xifthen}
\usepackage{authblk}

\title{Minimal classes of graphs of unbounded clique-width\\ defined by finitely many forbidden induced subgraphs}

\author[*]{A. Atminas}
\affil[*]{\small Department of Mathematics\\ London School of Economics, London, UK}
\author[$\dagger$]{R. Brignall$^{1,}$}
\affil[$\dagger$]{\small School of Mathematics and Statistics\\ The Open University, Milton Keynes, UK}
\author[$\ddagger$]{V. Lozin}
\affil[$\ddagger$]{\small Mathematics Institute\\University of Warwick, Coventry, UK}
\author[ ]{J. Stacho}

\footnotetext[1]{Partially supported by EPSRC Grant EP/J006130/1.}
\date{}

\newtheorem{theorem}{Theorem}[section]
\newtheorem{lemma}[theorem]{Lemma}
\newtheorem{claim}[theorem]{Claim}

\newtheorem{conj}[theorem]{Conjecture}
\newtheorem{prop}[theorem]{Proposition}

\tikzstyle{every node}=[circle, draw, fill=black,
                        inner sep=0pt, minimum width=4pt]
\newcommand{\mnnodearray}[2]{ 
\foreach \i in {1,...,#1}
	\foreach \j in {1,...,#2}
		\node at (\i,\j) {};
}
\newcommand{\uphoriz}[2]{ 
\pgfmathtruncatemacro\rightcol{#1 + 1}
\foreach \i in {1,...,#2}
	\foreach \j in {\i,...,#2}
		\draw (#1,\i) -- (\rightcol,\j);
}
\newcommand{\downhoriz}[2]{ 
\pgfmathtruncatemacro\rightcol{#1 + 1}
\foreach \i in {1,...,#2}
	\foreach \j in {\i,...,#2}
		\draw (#1,\j) -- (\rightcol,\i);
}
\newcommand{\upnohoriz}[2]{ 
\pgfmathtruncatemacro\rightcol{#1 + 1}
\pgfmathtruncatemacro\rowminus{#2 - 1}
\foreach \i in {1,...,\rowminus}
	{
	\pgfmathtruncatemacro\ip{\i+1}
	\foreach \j in {\ip,...,#2}
		\draw (#1,\i) -- (\rightcol,\j);
	}
}
\newcommand{\downnohoriz}[2]{ 
\pgfmathtruncatemacro\rightcol{#1 + 1}
\pgfmathtruncatemacro\rowminus{#2 - 1}
\foreach \i in {1,...,\rowminus}
	{
	\pgfmathtruncatemacro\ip{\i+1}
	\foreach \j in {\ip,...,#2}
		\draw (#1,\j) -- (\rightcol,\i);
	}
}
\newcommand{\curvetop}[3]{ 
	\draw[ultra thick] (#1,#3) edge[out=90,in=90] (#2,#3);
	\draw[thick,color=white] (#1,#3) edge[out=90,in=90,dashed] (#2,#3);
}
\newcommand{\labelrow}[2]{ 
\foreach \i in {1,...,#2}
	\node at (\i,#1) [label=below:\i] {};
}
\newcommand{\Free}{\operatorname{Free}}
\newcommand{\col}{\operatorname{col}}
\newcommand{\row}{\operatorname{row}}
\newcommand{\cwd}{\operatorname{cwd}}
\newcommand{\rwd}{\operatorname{rwd}}

\newcommand{\C}{\mathcal{C}}

\begin{document}
\maketitle
\begin{abstract}
We discover new hereditary classes of graphs that are minimal (with respect to set inclusion) of unbounded clique-width. The new examples include split permutation graphs and bichain graphs. Each of these classes is  characterised by a finite list of minimal forbidden induced subgraphs. These, therefore, disprove a conjecture due to Daligault, Rao and Thomass\'e from 2010 claiming that all such minimal classes must be defined by infinitely many forbidden induced subgraphs.

In the same paper, Daligault, Rao and Thomass\'e make another conjecture that every hereditary class of unbounded clique-width must contain a labelled infinite antichain. We show that the two example classes we consider here satisfy this conjecture. Indeed, they each contain a canonical labelled infinite antichain, which leads us to propose a stronger conjecture: that every hereditary class of graphs that is minimal of unbounded clique-width contains a \emph{canonical} labelled infinite antichain.
\end{abstract}

{\em Keywords:} Clique-width; Rank-width; Hereditary class; Universal graph; Well-quasi-ordering

\section{Introduction}
In this paper, we study two notions: clique-width and well-quasi-ordering.
The first of them is a representative of the rich world of graph width parameters,
which includes both parameters studied in the literature for decades, such as path-width \cite{robertson:graph-minors-i:} or 
tree-width \cite{robertson:graph-minors-iii:}, and those that have been introduced recently, such as 
Boolean-width \cite{bui-xuan:boolean-width:} or plane-width \cite{kaminski:the-plane-width:}. 
The notion of clique-width belongs to the middle generation of graph width parameters.
The importance of this and many other parameters is due to the fact that 
many difficult algorithmic problems become tractable when restricted to graphs 
where one of these parameters is bounded by a constant.

The second notion of our interest is well-quasi-ordering. This is a highly desirable property and frequently discovered
concept in mathematics and theoretical computer science \cite{kruskal:the-theory-of-w:}. However, only a few examples of quasi-ordered sets possessing this property are available in the literature. One of the most remarkable results in this area is the proof of Wagner's conjecture stating that the set of all finite graphs is well-quasi-ordered by the minor relation
\cite{robertson:graph-minors-xx:}. However, the induced subgraph relation is not a well-quasi-order, 
because it contains infinite antichains, for instance, the set of all cycles. On the other hand, for graphs in particular classes this relation may become a well-quasi-order, which is the case, for instance, for cographs \cite{damaschke:induced-subgrap:}, and $k$-letter graphs \cite{petkovsek:letter-graphs-a:}. 
\pagebreak

In fact, both cographs and $k$-letter graphs possess a stronger property, namely that they are well-quasi-ordered with respect to the \emph{labelled} induced subgraph relation. If $(W,\leq)$ denotes a quasi-ordered set of labels, a \emph{labelling} of a graph $G$ is a function $\ell:V(G)\rightarrow W$, and we call the pair $(G,\ell)$ a \emph{labelled graph}. A labelled graph $(H,k)$ is a \emph{labelled induced subgraph} of $(G,\ell)$ if $H$ is isomorphic to an induced subgraph of $G$, and the isomorphism maps each vertex $v\in H$ to a vertex $w\in G$ so that $k(v)\leq \ell(w)$ in $W$. We say that a class of (unlabelled) graphs is \emph{labelled well-quasi-ordered} if it contains no set of graphs that can form an infinite antichain when the vertices are labelled from a set $(W,\leq)$ that is itself well-quasi-ordered.\footnote{By a slight abuse of notation, we will in future say that an unlabelled class of graphs \emph{contains} a labelled infinite antichain to mean that the class contains an infinite set of graphs that can be labelled in such a way as to form a labelled infinite antichain.}

For classes of graphs that are not labelled well-quasi-ordered, a natural question to ask is whether there exist subclasses that \emph{are}. In this line of enquiry, Guoli Ding~\cite{ding:on-canonical-antichains:} was the first to observe a phenomenon whereby a particular antichain is `unique' in its class, in the sense that well-quasi-ordering for subclasses can be determined by the presence or absence of elements of this antichain. Formally speaking (and extending the notion to labelled well-quasi-ordering), we say that a labelled infinite antichain $\cal A$ in a hereditary class $\C$ is \emph{canonical} if any hereditary subclass of $\C$ that has only finite intersection with (the underlying unlabelled graphs of) $\cal A$ is labelled well-quasi-ordered. 

In this paper, we will prove the following result. For the formal definitions of the two classes and of clique-width, see Section~\ref{sec:pre}.

\begin{theorem}\label{thm:main}The classes of bichain graphs and split permutation graphs are minimal hereditary classes (with respect to inclusion) of unbounded clique-width, and each contains a canonical labelled  infinite antichain that uses two incomparable labels.\end{theorem}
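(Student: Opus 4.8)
The plan is to establish all four claims — unbounded clique-width, minimality, the presence of a labelled infinite antichain over two incomparable labels, and its canonicity — for the class of bichain graphs, and then transfer them to split permutation graphs. The bridge is the classical bijection between bipartite graphs and split graphs obtained by turning one side of the bipartition into a clique: this operation changes the cut-rank of every vertex bipartition by at most $1$, hence changes rank-width by at most $1$; in particular it preserves the property of having bounded clique-width, since that is equivalent to having bounded rank-width. It also preserves the labelled induced-subgraph order, and one verifies from the forbidden-subgraph descriptions in Section~\ref{sec:pre} that it restricts to a bijection between bichain graphs and split permutation graphs. So it suffices to work with bichain graphs. (Should the exact correspondence prove awkward, the two classes can instead be handled by parallel arguments.)

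Next I would fix an explicit sequence $(B_n)_{n\ge 1}$ of bichain graphs, each carrying a colouring of its vertices by two incomparable labels, to serve as the common witness for the remaining claims. The natural candidate is a ``two-dimensional staircase'': a bipartite graph whose two parts carry linear orders and whose biadjacency pattern realises a generic monotone $n\times n$ shape assembled from the two constituent chain structures, with the two labels used to mark its distinguished corners. Two facts about this sequence are needed. First, $\cwd(B_n)\to\infty$: here I would use the standard clique-width lower bound — if $\cwd(G)\le k$ then there is a set $A\subseteq V(G)$ with $|V(G)|/3\le|A|\le 2|V(G)|/3$ for which $\{\,N(v)\setminus A : v\in A\,\}$ has at most $k$ elements (read off at a balanced node of a $k$-expression tree) — and then check that for $B_n$ every such $A$ has $\Omega(n)$ distinct traces $N(v)\setminus A$. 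This already shows that bichain graphs, and hence split permutation graphs, have unbounded clique-width. Second, the labelled graphs $B_i$ and $B_j$ are incomparable in the labelled induced-subgraph order for $i\ne j$: the two marked corners rigidify any candidate embedding so that it must respect the staircase coordinates, after which incomparability is a short combinatorial check.

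The core of the argument is two ``collapse'' lemmas for bichain graphs, each asserting that forbidding a single graph from this linearly structured class forces a drastic simplification, and both proved by Ramsey-type arguments. For \emph{minimality}: a proper hereditary subclass of the bichain graphs omits some bichain graph $H$ and is therefore contained in the class of $H$-free bichain graphs, so it suffices to show the latter has bounded clique-width. The point is that a bichain graph is captured by a bounded amount of local data attached to the two linear orders on its sides, so a sufficiently long ``$H$-homogeneous'' stretch of these orders forces an induced copy of $H$; by Ramsey's theorem an $H$-free bichain graph therefore has bounded ``staircase length'', and from a bounded decomposition of this kind one reads off a clique-width bound. For \emph{canonicity}: since $(B_n)$ is an antichain, any hereditary subclass of the bichain graphs meeting $\{B_n\}$ only finitely often must omit $B_N$ as an induced subgraph for all sufficiently large $N$; so it suffices to show the class of $B_N$-free bichain graphs is labelled well-quasi-ordered, and this follows from the same bounded-staircase structure via Higman's lemma applied to the bounded-length encodings, with the label set taken to be the product of the given well-quasi-ordered labels and the finite encoding alphabet. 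Together with the incomparability above, this makes $\{B_n\}$ a canonical labelled infinite antichain.

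The main obstacle is calibrating the encoding of bichain graphs used in the two collapse lemmas: it must be coarse enough that forbidding a single graph lets Ramsey's theorem cap its complexity, yet fine enough that this bounded complexity genuinely yields both a clique-width bound and a well-quasi-order — and for the latter it must interact correctly with the vertex labels. A second delicate point is designing $(B_n)$ together with its two-label marking so that it is simultaneously of unbounded clique-width, an antichain, and ``unavoidable'' in exactly the sense canonicity demands: the marking must be just strong enough to rigidify embeddings without ejecting the graphs from the class. By comparison, the split-permutation transfer and the verification of the clique-width lower bound on $(B_n)$ should be routine.
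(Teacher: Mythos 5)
There is a genuine structural flaw in your plan: the single sequence $(B_n)$ is required to do two jobs that are mutually exclusive in this class. You ask that $\cwd(B_n)\to\infty$ and simultaneously that $\{B_n\}$ (with its two-label marking) be a \emph{canonical} labelled antichain. But the bichain class contains hereditary subclasses of \emph{bounded} clique-width that are not labelled well-quasi-ordered: the paper's antichain graphs $S_k$ (the staircase with $x_iy_j\in E$ iff $j=i$ or $j\ge i+2$) have linear structure of bounded width (one checks directly that adding vertices in the order $x_1,y_1,x_2,y_2,\dots$ leaves at most four distinct ``future neighbourhoods'' at any time, so $\cwd(S_k)$ is bounded by a small constant), yet their hereditary closure carries the two-labelled antichain $\{S_k^\circ\}$ and so is not labelled wqo. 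By the definition of canonicity, any canonical antichain must meet this closure in infinitely many (unlabelled) graphs, and clique-width is monotone under induced subgraphs; hence any canonical labelled antichain for bichain graphs necessarily contains infinitely many members of bounded clique-width. So your $(B_n)$ cannot exist as specified: the witnesses of unbounded clique-width and the canonical antichain must be different families. In the paper these roles are indeed separated: unboundedness is witnessed by the universal $Z$-grids $Z_{n,n}$ (in fact imported from known results on split permutation graphs), while the canonical antichain is the low-complexity staircase family $S_k^\circ$.

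The same tension undermines your two ``collapse'' lemmas, which you explicitly base on ``the same bounded-staircase structure.'' If forbidding an arbitrary bichain graph $H$ forced a bounded-length staircase encoding strong enough to feed Higman's lemma (as your canonicity step requires), then \emph{every} proper hereditary subclass of bichain graphs would be labelled wqo --- contradicted again by the closure of $\{S_k\}$, which is a proper subclass (it has bounded clique-width) but not labelled wqo. So the minimality collapse cannot be proved in the form you state, and this is precisely where the real work lies. The paper's minimality proof is quite different: every forbidden $H$ on $k$ vertices forces $Z_{k,k}$-freeness (universality of $Z$-grids); each $G$ in the subclass is embedded in the universal $Y$-grid, augmented by a pivot row, and transformed by pivots (local complementations, which preserve rank-width exactly) into a bipartite permutation graph; an embedding-control lemma for $X$-grids plus Lozin's theorem that $X_{n,n}$-free bipartite permutation graphs have bounded clique-width then gives the bound (an alternative direct proof in the appendix needs a delicate flow/cut decomposition). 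Likewise, for canonicity the bounded-column grid embedding is only true for \emph{canonically prime} $S_k$-free graphs; the reduction to prime graphs via the canonical decomposition (disjoint union, join, skew join; Theorem~\ref{thm:canonical}) is an essential ingredient your sketch omits, after which the letter-graph theorem of Petkov\v{s}ek plays the role you assign to Higman's lemma. Your transfer between bichain and split permutation graphs via a rank-width change of at most one is fine in spirit and matches the paper's use of Proposition~\ref{prop:sp-bi}, but the core of the argument needs to be rebuilt along the lines above.
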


As both the class of bichain graphs and the class of split permutation graphs are characterised by having only finitely many minimal forbidden induced subgraphs (see Section~\ref{sec:pre}), Theorem~\ref{thm:main} disproves a conjecture due to Daligault, Rao and Thomass\'e~\cite[Conjecture 8]{daligault:well-quasi-orde:}, which stated that every minimal class (with respect to set inclusion) of unbounded clique-width must be characterised by \emph{infinitely} many minimal forbidden induced subgraphs.

In the same paper, Daligault, Rao and Thomass\'e proposed another conjecture that relates labelled well-quasi-ordering to boundedness of clique-width. We will postpone further discussion about this until the concluding remarks, except to say that it is this conjecture that motivates us to study clique-width and (labelled) well-quasi-ordering simultaneously.

The rest of this paper is organised as follows. In Section~\ref{sec:pre} we introduce basic definitions, including results about the two classes we are considering that allows us to focus solely on bichain graphs and ignore split permutation graphs thereafter. In Section~\ref{sec:universal} we will review existing constructions of so-called `universal' graphs for various classes, as well as introducing one further construction for the class of bichain graphs. In Section~\ref{sec:main-j} we will establish that the bichain graphs are minimal with unbounded clique-width, and in Section~\ref{sec:canonical} we will show that the bichain graphs contain a canonical 2-labelled infinite antichain. Section~\ref{sec:con} contains some concluding remarks, including our proposed strengthening of Daligault, Rao and Thomass\'e's conjecture that relates clique-width to well-quasi-ordering.

\section{Preliminaries} 
\label{sec:pre}

All graphs in this paper are simple, i.e.\ undirected, without loops and multiple edges.
We denote by $V(G)$ and $E(G)$ the vertex set and the edge set of a graph $G$, respectively.
Given a vertex $v\in V(G)$, we denote by $N_G(v)$, or simply $N(v)$ where the context is clear, the \emph{neighbourhood} of $v$, i.e.\ the set of vertices adjacent to $v$. 

Given a graph $G$ and a subset $U\subseteq V(G)$, we denote by $G[U]$ the subgraph of $G$ \emph{induced} by $U$, i.e.\ the subgraph of $G$ with vertex set $U$ and two vertices being
adjacent in $G[U]$ if and only if they are adjacent in $G$. 
We say that a graph $H$ is an \emph{induced subgraph} of $G$, or $G$ \emph{contains} $H$ as an induced subgraph, 
if $H$ is isomorphic to $G[U]$ for some $U\subseteq V(G)$. An \emph{embedding} of $H$ in $G$ is an injective map $\phi:V(H) \rightarrow V(G)$ which witnesses $H$ as an induced subgraph of $G$. Thus, $vw\in E(H)$ if and only if $\phi(v)\phi(w)\in E(G)$. By a slight abuse of notation, we will often use the term $\phi(H)$ to mean the induced subgraph of $G$ on the vertex set $\phi(V(H))$.

If no subset of $V(G)$ induces the graph $H$, we say that 
$G$ is $H$-free. The set of all $H$-free graphs is denoted by $\Free(H)$, and, by extension, if $\{H_1,H_2,\dots\}$ is a (possibly infinite) collection of graphs, denote by $\Free(H_1,H_2,\dots)$ the set of all graphs $G$ that contain none of $H_1,H_2,\dots$. 

A \emph{hereditary property} of graphs (which in this paper we will also call a \emph{graph class}) is a set of graphs $\C$ such that whenever $G\in\C$ and $H$ is an induced subgraph of $G$, then also $H\in \C$. It is well-known that for any hereditary property $\C$ there exists a unique (but not necessarily finite) set of \emph{minimal forbidden graphs}, $\{H_1,H_2,\dots\}$, such that $\C = \Free(H_1,H_2,\dots)$. If the set of minimal forbidden graphs for a class $\C$ is finite, then we say that $\C$ is \emph{finitely-defined}.

\subsection{Split permutation graphs and bichain graphs}

In this paper, we are concerned primarily with two specific classes of graphs, which in this subsection we will define and then review some of their key properties. In a graph, a \emph{clique} is a subset of pairwise adjacent vertices, and an \emph{independent set} is a subset of pairwise
non-adjacent vertices. A graph $G$ is a \emph{split} graph if its vertices can be partitioned into an independent set and a clique, 
and $G$ is a \emph{bipartite} graph if its vertices can be partitioned into two independent sets (also called colour classes or
simply \emph{parts}).

Our first class is the class of \emph{split permutation graphs}. As the name suggests, these are split graphs that also happen to be permutation graphs, although we will not use this characterisation in the sequel.  Instead, it was shown in~\cite{korpelainen:split:} that the class of split permutation graphs is the same as the class of split graphs having Dilworth number at most two, where the Dilworth number
of a graph $G$ is the size of a largest antichain with respect to the following quasi-order $\preceq$ defined on the vertices of $G$:
\[
x\preceq y \mbox{ if and only if } N_G(x)\subseteq N_G(y)\cup \{y\}.
\]
Importantly, we have the following fact.

\begin{lemma}[Benzaken, Hammer and de Werra~\cite{benzaken:split-graphs:}]The split graphs of Dilworth number at most two are characterised by seven minimal forbidden induced subgraphs.\end{lemma}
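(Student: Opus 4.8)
The plan is to organise the enumeration around the Foldes--Hammer characterisation, which states that a graph is split if and only if it contains none of $2K_2$, $C_4$, $C_5$ as an induced subgraph. Let $H$ be a minimal graph that is not a split graph of Dilworth number at most two. If $H$ is not split, then it contains one of $2K_2,C_4,C_5$; by minimality this containment cannot be proper, so $H\in\{2K_2,C_4,C_5\}$, and one checks directly that each of these three graphs has the property that all of its proper induced subgraphs are split of Dilworth number at most two, so each really is a minimal forbidden graph. The remaining, and main, case is that $H$ is a split graph carrying a $\preceq$-antichain of size three, and the task is to list the minimal such graphs.

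For this case I would fix a split partition $V(H)=K\cup I$ with $K$ a clique and $I$ independent, and use the key structural observation that every $a\in I$ satisfies $a\preceq x$ for every $x\in K$: indeed $N_H(a)\subseteq K$ since $I$ is independent, while $N_H(x)\cup\{x\}\supseteq K$ since $K$ is a clique. Hence no $\preceq$-antichain meets both $K$ and $I$, so a size-three antichain lies entirely in $I$ or entirely in $K$. Moreover complementation turns $\preceq$ into its reverse (so it preserves the Dilworth number and fixes the class) and interchanges the roles of $K$ and $I$; therefore the split minimal forbidden graphs occur in complementary pairs, and without loss of generality we may assume the antichain $\{a,b,c\}$ lies in $I$, i.e.\ that $N_H(a),N_H(b),N_H(c)\subseteq K$ are pairwise incomparable under inclusion.

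Next I would exploit minimality via the (easy but crucial) fact that deleting a vertex preserves every relation $x\preceq y$ among the surviving vertices, and so can never increase the Dilworth number. Consequently $I$ must be exactly $\{a,b,c\}$, and every vertex of $K$ must be \emph{needed} to keep $\{a,b,c\}$ an antichain --- otherwise a one-vertex deletion would still lie outside the class. Recording for each $x\in K$ its \emph{profile} $N_H(x)\cap\{a,b,c\}$, this forces the profiles to be pairwise distinct and equal to neither $\emptyset$ nor $\{a,b,c\}$, and forces the family of profiles present to be an inclusion-minimal ``cover'' of the six strict non-inclusions $N_H(a)\not\subseteq N_H(b),\dots$ (where a profile $p$ covers ``$N_H(a)\not\subseteq N_H(b)$'' exactly when $a\in p$ and $b\notin p$). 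Encoding the six admissible profiles as the edges of a $6$-cycle whose six vertices are these non-inclusions, ``covers'' become edge covers, and a short check shows there are precisely five minimal ones: the two perfect matchings and three covers of size four. Together with the complementation symmetry these yield exactly four split minimal forbidden graphs --- the net, the $3$-sun ($=$ complement of the net), a seven-vertex graph built on a $K_4$, and the complement of the latter --- which with $2K_2,C_4,C_5$ give the seven graphs claimed.

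The step I expect to be the main obstacle is the minimality bookkeeping at the end: translating ``deleting any vertex decreases the Dilworth number'' cleanly into the statements that the profiles are distinct, omit $\emptyset$ and $\{a,b,c\}$, and form an inclusion-\emph{minimal} cover; and then, in the converse direction, verifying that each of the four graphs emerging from the covering analysis genuinely is minimal forbidden --- which reduces to showing that its only size-three $\preceq$-antichain is the designated $\{a,b,c\}$ and that every single-vertex deletion breaks that antichain. One must also confirm that no one of the seven graphs is an induced subgraph of another and that the list is exhaustive. All of this is a finite verification, but it has to be set up carefully so that the case split on the location of the antichain and the use of complementation do not leave gaps.
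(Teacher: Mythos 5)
The paper itself offers no proof of this lemma: it is imported directly from Benzaken, Hammer and de Werra, and the text merely records the resulting list $2K_2$, $C_4$, $C_5$, Sun$_3$, Co-sun$_3$, Rising-sun, Co-rising-sun. So there is no in-paper argument to compare yours against; judged on its own, your derivation is sound and reproduces exactly that list. The skeleton is correct: the Foldes--Hammer characterisation settles the non-split case; for a split minimal forbidden graph every vertex of the independent side is $\preceq$-below every vertex of the clique side, so a three-element antichain lies wholly on one side, and complementation (which reverses $\preceq$, preserves the Dilworth number and fixes the class) lets you assume it lies in $I$; vertex deletion preserves comparabilities, so minimality does force $I=\{a,b,c\}$, pairwise distinct profiles that are nonempty proper subsets of $\{a,b,c\}$, and an inclusion-minimal cover of the six non-inclusions; and the profile/pair incidence really is a $6$-cycle whose minimal edge covers are the two perfect matchings and the three tilings by two disjoint two-edge paths. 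These give the net, the $3$-sun, and three isomorphic copies of a seven-vertex graph which is precisely the Rising-sun (deleting its clique edges leaves a $P_7$, matching the paper's Proposition~\ref{prop:sp-bi} discussion), so with complements one gets the four split graphs of the list.

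One detail of your closing verification needs repair. You propose to certify minimality of each split candidate by showing that ``its only size-three $\preceq$-antichain is the designated $\{a,b,c\}$'' and that every deletion breaks it. That premise is false for the $3$-sun and for the net: in both, the three clique vertices form a second $\preceq$-antichain (in the $3$-sun, the clique vertex adjacent to $a$ and $b$ has $b$ in its neighbourhood, while $b$ does not lie in the closed neighbourhood of the clique vertex adjacent to $a$ and $c$, and symmetrically; the same happens in the net). The correct, and still finite, check is simply that for every vertex $v$ the graph $H-v$ has Dilworth number at most two, i.e.\ that no size-three antichain of any kind survives any single deletion; since deletion never creates new incomparabilities, this amounts to checking that each deletion destroys both the antichain in $I$ and the one in the clique where the latter exists. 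This verification succeeds for all four candidates, so the slip does not affect the conclusion, but as literally stated your sufficiency step would fail for two of the seven graphs.
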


For completeness, the forbidden induced subgraphs of split graphs with Dilworth number at most two (and hence split permutation graphs\footnote{Note that it is possible to compute the forbidden induced subgraphs of split permutation graphs directly by intersecting the class of split graphs with the class of permutation graphs, both of whose sets of minimal forbidden induced subgraphs are well-known.}) are $2K_2, C_4, C_5, \text{Sun}_3, \text{Co-sun}_3, \text{Rising-sun}$ and $\text{Co-rising-sun}$. 

The second class we will study are the bichain graphs. We say that a set of vertices forms a \emph{chain} if their neighbourhoods
form a chain with respect to set inclusion, i.e.\ if they can be 
linearly ordered under the set inclusion relation.
A bipartite graph will be called a \emph{$k$-chain graph} if the vertices in each part of its bipartition can be divided into at most $k$ chains. 

The 1-chain graphs are known simply as \emph{chain graphs}, and a typical example
of a chain graph is illustrated in Figure~\ref{fig:H5}. The importance of this 
example is due to the fact that the represented graph contains as induced subgraphs all chain graphs 
with at most 5 vertices, i.e.\ it is `5-universal'. This concept of `universality' will be crucial later, and is introduced formally in Section~\ref{sec:universal}.
\begin{figure}[ht]
\centering
\begin{tikzpicture}[scale=1.8]
\foreach \j in {5,...,1}
{
\node (\j1) at (\j,1) {};
\node (\j2) at (\j,2) {};
\foreach \k in {\j,...,5}
\draw (\k1) -- (\j2);
}
\end{tikzpicture}
\caption{5-universal chain graph}
\label{fig:H5}
\end{figure}
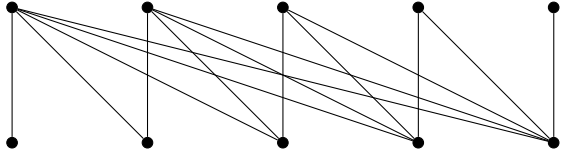

The simple structure of chain graphs implies many nice properties. In particular, 
the clique-width of chain graphs is at most three \cite{golumbic:on-the-clique-width:} and they are well-quasi-ordered by 
the induced subgraph relation~\cite{petkovsek:letter-graphs-a:}. 

\medskip
The class of $2$-chain graphs, also called \emph{bichain graphs}, 
has a much richer theory, and is our second class of study in this paper. We note here the following result from Korpelainen's PhD thesis.

\begin{prop}[{Korpelainen~\cite[Proposition 3.3.10]{korpelainen:phd}}]
The class of bichain graphs\footnote{Note that Korpelainen~\cite{korpelainen:phd} uses the term `double bichain' to mean `bichain' as defined here.} is equal to the class of $(P_7, C_6, 3K_2)$-free bipartite graphs.
\end{prop}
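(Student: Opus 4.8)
The plan is to prove the two inclusions separately. For the inclusion of the bichain graphs in the $\{P_7,C_6,3K_2\}$-free bipartite graphs, the first observation is that if a bipartite graph $H$ has a part containing three vertices whose neighbourhoods are pairwise incomparable under set inclusion, then $H$ is not a bichain graph: in any division of that part into two chains, two of the three vertices would lie in a common chain and hence have comparable neighbourhoods. One then checks directly that each of $P_7$, $C_6$ and $3K_2$ has this feature in its (essentially unique) bipartition --- for $3K_2$ the three ``centres'' on one side have pairwise disjoint one-element neighbourhoods; for $C_6$ the three vertices of one colour class have as neighbourhoods the three $2$-subsets of the other class; and for $P_7=v_1v_2\cdots v_7$ the vertices $v_2,v_4,v_6$ have neighbourhoods $\{v_1,v_3\}$, $\{v_3,v_5\}$, $\{v_5,v_7\}$. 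Since the class of bichain graphs is hereditary (neighbourhood chains restrict to neighbourhood chains when passing to an induced subgraph), this yields the first inclusion.

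For the converse, let $G$ be a $\{P_7,C_6,3K_2\}$-free bipartite graph with parts $X$ and $Y$; I must show that $X$ (and, by symmetry, $Y$) is the union of at most two chains. Consider the preorder on $X$ given by $x\preceq x'$ iff $N(x)\subseteq N(x')$. By Dilworth's theorem (applied after identifying vertices of equal neighbourhood), $X$ is the union of two chains unless it contains three vertices $x_1,x_2,x_3$ with pairwise incomparable neighbourhoods; assume the latter, for contradiction. Classify each vertex of $Y$ according to its adjacencies to $x_1,x_2,x_3$; in particular write $P_i=N(x_i)\setminus\bigl(N(x_j)\cup N(x_k)\bigr)$ for the ``private'' neighbours of $x_i$ (with $\{i,j,k\}=\{1,2,3\}$) and $B_i=N(x_j)\cap N(x_k)\setminus N(x_i)$ for the vertices adjacent to exactly the other two of $x_1,x_2,x_3$. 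Translating pairwise incomparability into statements about which of these classes are nonempty is the routine bookkeeping that drives the rest of the proof.

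The case analysis then has three cases. If $P_1$, $P_2$ and $P_3$ are all nonempty, one vertex chosen from each forms, with $x_1,x_2,x_3$, an induced $3K_2$. Otherwise, relabel so that $P_3=\emptyset$, i.e.\ $N(x_3)\subseteq N(x_1)\cup N(x_2)$; incomparability then forces $B_2\neq\emptyset$ and $B_3\neq\emptyset$, and picking $b_2\in B_2$, $b_3\in B_3$ gives an induced $P_5$ on $x_1,b_2,x_3,b_3,x_2$ (in that order). If moreover $P_1\neq\emptyset$ and $P_2\neq\emptyset$, appending a private neighbour of $x_1$ at one end and of $x_2$ at the other extends this $P_5$ to an induced $P_7$. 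In the remaining case $P_3=\emptyset$ and, after a further relabelling, $P_1=\emptyset$ as well; incomparability now forces $B_1,B_2,B_3$ all nonempty, and one vertex from each, interleaved with $x_1,x_2,x_3$, induces a $C_6$. Each case contradicts $\{P_7,C_6,3K_2\}$-freeness, so $X$ --- and by symmetry $Y$ --- is a union of at most two chains, i.e.\ $G$ is a bichain graph.

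The only real obstacle is organisational: one must keep the eight adjacency classes of $Y$ relative to $\{x_1,x_2,x_3\}$ straight and, in each of the three constructions, confirm that all the required edges are present and all other pairs nonadjacent --- the non-edges between two vertices of $Y$ and between two of the $x_i$ come for free, and the rest are immediate from the class definitions. I would therefore set up the classification once and then record the three subgraphs with a brief adjacency check each.
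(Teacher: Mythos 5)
The paper itself does not prove this proposition---it is quoted from Korpelainen's thesis---so there is no in-paper argument to compare against; I am assessing your proof on its own terms. Your overall strategy is the natural one and is sound: the easy inclusion via a $3$-element antichain of neighbourhoods in each of $P_7$, $C_6$, $3K_2$ together with heredity of the bichain property, and the converse via Dilworth applied to the neighbourhood-containment order, followed by a case analysis on the private sets $P_i$ and doubly-shared sets $B_i$. The $3K_2$ case and the $C_6$ case (two of the $P_i$ empty forces $B_1,B_2,B_3\neq\emptyset$) are correct as written.

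There is, however, a concrete slip in the middle case that breaks it as stated. From $P_3=\emptyset$, i.e.\ $N(x_3)\subseteq N(x_1)\cup N(x_2)$, together with $N(x_3)\not\subseteq N(x_1)$ and $N(x_3)\not\subseteq N(x_2)$, what follows is $B_1\neq\emptyset$ and $B_2\neq\emptyset$ (a vertex of $N(x_3)\setminus N(x_1)$ must lie in $N(x_2)$, hence in $B_1$, and symmetrically), \emph{not} $B_2\neq\emptyset$ and $B_3\neq\emptyset$: the condition $N(x_1)\not\subseteq N(x_3)$ may be witnessed entirely by $P_1$, so $B_3$ can be empty in this case. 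Moreover, with your own definition of $B_3$ (adjacent to $x_1,x_2$, not $x_3$), the sequence $x_1,b_2,x_3,b_3,x_2$ is not an induced $P_5$ in that order: $b_3x_3$ is a non-edge and $b_3x_1$ is an edge, so these five vertices induce a path centred at $x_1$, which cannot then be extended at its ends by private neighbours of $x_1$ and $x_2$ to a $P_7$. The repair is immediate and local: take $b_1\in B_1$ and $b_2\in B_2$, observe that $x_1,b_2,x_3,b_1,x_2$ is an induced $P_5$, and append $p_1\in P_1$ before $x_1$ and $p_2\in P_2$ after $x_2$; all adjacency and non-adjacency checks then go through exactly as you describe, and the case analysis (all $P_i$ nonempty, exactly one empty, at least two empty) is exhaustive. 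With this correction your proof is complete.
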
 

Because the graph $P_7$ excludes odd cycles on 9 or more vertices, we note that (by adding the cycles $C_3, C_5$ and $C_7$ to the list of forbidden graphs) the result above implies that the class of bichain graphs is finitely-defined.

We have shown that both the class of bichain graphs and the class of split permutation graphs are finitely-defined, but in fact, these classes (and their sets of minimal forbidden graphs) are closely connected -- from the definition of bichain graphs and the characterization of split permutation graphs in terms of their Dilworth number we have 
the following.

\begin{prop}[See Korpelainen~\cite{korpelainen:phd}]\label{prop:sp-bi}
Let $G$ be a split graph given together with a partition of its vertex
set into a clique $C$ and an independent set $I$, and let $G^*$ be 
the bipartite graph obtained from $G$ by deleting the edges of $C$. 
Then  $G$ is a split permutation graph if and only if $G^*$ is 
a bichain graph. 
\end{prop}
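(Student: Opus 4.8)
The plan is to route everything through the quasi-order $\preceq$. Recall that, by the characterisation of split permutation graphs from~\cite{korpelainen:split:}, a split graph is a split permutation graph if and only if its Dilworth number is at most two, that is, if and only if its vertex set carries no $\preceq$-antichain of size three. Since $G^*$ is bipartite with bipartition $(C,I)$ as soon as $G$ is split with that partition, and since a bipartite graph is a bichain graph exactly when the vertices on each side of its bipartition can be partitioned into at most two chains (sets of vertices whose neighbourhoods are linearly ordered by inclusion), it is enough to prove the following: the Dilworth number of $G$ is at most two if and only if each of $C$ and $I$, equipped with the neighbourhood-inclusion order of $G^*$, is a union of at most two chains.

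First I would pin down $\preceq$ inside the split structure, by means of two observations. (i) Every vertex of $I$ is $\preceq$-below every vertex of $C$: if $x\in I$ and $u\in C$ then $N_G(x)\subseteq C$ because $I$ is independent, while $C\setminus\{u\}\subseteq N_G(u)$ because $C$ is a clique, so $N_G(x)\subseteq C\subseteq N_G(u)\cup\{u\}$ and hence $x\preceq u$. Consequently any $\preceq$-antichain of $V(G)$ lies entirely in $I$ or entirely in $C$, so the Dilworth number of $G$ equals the larger of the maximum antichain sizes of $(I,\preceq)$ and $(C,\preceq)$. (ii) On each of these two sides, $\preceq$ coincides with neighbourhood inclusion in $G^*$: for $x,y\in I$ we have $N_G(x)=N_{G^*}(x)\subseteq C$ and $y\notin N_G(x)$, so $x\preceq y$ if and only if $N_{G^*}(x)\subseteq N_{G^*}(y)$; for $u,v\in C$ we have $N_G(u)=(C\setminus\{u\})\cup N_{G^*}(u)$ and $N_G(v)\cup\{v\}=C\cup N_{G^*}(v)$, from which $u\preceq v$ if and only if $N_{G^*}(u)\subseteq N_{G^*}(v)$. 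In particular a subset of $I$, or of $C$, is a $\preceq$-chain exactly when the corresponding neighbourhoods in $G^*$ form a chain under inclusion, i.e.\ exactly when it is a chain in the sense used to define bichain graphs.

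The last step is to apply Dilworth's theorem. Passing to the partial order obtained from the quasi-order $\preceq$ by identifying $x$ and $y$ whenever $x\preceq y\preceq x$ (antichains and minimum chain covers are unchanged by this quotient), Dilworth's theorem gives that $(I,\preceq)$ has no antichain of size three precisely when $I$ is a union of at most two $\preceq$-chains, and similarly for $C$. Combined with observation (i), this says that the Dilworth number of $G$ is at most two if and only if both parts of $G^*$ are unions of at most two chains, i.e.\ if and only if $G^*$ is a bichain graph; together with the characterisation of split permutation graphs via the Dilworth number this is exactly the proposition. I do not expect a genuine obstacle here: the one place where care is needed is observation (ii), where one must keep track of whether a neighbourhood is computed in $G$ or in $G^*$ and handle the $\{v\}$ corrections in the definition of $\preceq$ — but this is bookkeeping, and the whole content of the argument sits in the two observations of the middle paragraph.
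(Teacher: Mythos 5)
Your argument is correct, but it follows a genuinely different route from the paper. The paper justifies the proposition (attributing the full proof to Korpelainen's thesis) by matching up the two finite lists of obstructions: among the seven minimal forbidden induced subgraphs of split graphs with Dilworth number at most two, the split ones transform under the map $G\mapsto G^*$ exactly into the bipartite obstructions for bichain graphs ($\text{Sun}_3\mapsto 3K_2$, $\text{Co-sun}_3\mapsto C_6$, $\text{Rising-sun},\text{Co-rising-sun}\mapsto P_7$), so excluding one family is equivalent to excluding the other. You instead work directly with the quasi-order $\preceq$: your observations that every vertex of $I$ lies $\preceq$-below every vertex of $C$ (so maximum antichains live inside one part) and that on each part $\preceq$ reduces to neighbourhood inclusion in $G^*$ are both verified correctly, and Dilworth's theorem (applied to the quotient poset) then translates ``no antichain of size three'' into ``each part is a union of at most two chains,'' which is the definition of a bichain graph with respect to the bipartition $(C,I)$ -- the bipartition clearly intended by the statement, and in any case immaterial since bichain-ness does not depend on the choice. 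What your approach buys is a self-contained proof that needs neither the Benzaken--Hammer--de Werra list nor Korpelainen's $P_7,C_6,3K_2$ characterisation, and it generalises verbatim to ``Dilworth number at most $k$ iff $G^*$ is a $k$-chain graph''; what the paper's route buys is brevity given that both finite characterisations are already quoted, together with the explicit correspondence of obstructions, which the authors also want in order to exhibit both classes as finitely defined.
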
   

The validity of the proposition can be seen by noting that when $G$ is a split graph that is also a forbidden induced subgraph for the split permutation graphs, then $G^*$ is a bipartite forbidden induced subgraph for the bichain graphs, and vice-versa: if $G$ is Sun$_3$, then $G^*$ is $3K_2$, if $G$ is Co-sun$_3$, then $G^*$ is
$C_6$ and that if $G$ is Rising-sun or Co-rising-sun, then $G^*$ is $P_7$.

\subsection{Clique-width and rank-width}

In this subsection, we will introduce two graph parameters: clique-width, and its close relation rank-width. In this paper, we will not actually require the formal definitions of either of these two concepts, so we will introduce them only very briefly.

Let $G$ be a graph. The \emph{clique-width} of $G$, denoted $\cwd(G)$, is the size of the smallest alphabet $\Sigma$ such that $G$ can be constructed as a (labelled) graph using four operations: 
(1) adding a new vertex labelled by $i\in\Sigma$; 
(2) adding an edge between every vertex labelled $i$ and every vertex labelled $j$ (for distinct $i,j\in\Sigma$);
(3) for $i,j\in\Sigma$, giving all vertices labelled $i$ the label $j$; and
(4) taking the disjoint union of two previously-constructed ($\Sigma$-labelled) graphs. As alluded to in the introduction, the significance of clique-width is in the study of algorithms. Specifically, Courcelle, Makowsky and Rotics~\cite{courcelle:linear-time-sol:} showed that a large number of graph algorithms which are NP-hard in general can be solved in linear time for classes where all the graphs have clique-width at most some fixed $k$.  

The other parameter that we will utilise is the \emph{rank-width}, first introduced by Oum and Seymour in~\cite{oum:approximating-c:}. Unlike clique-width, it is not defined in terms of graph operations (although Courcelle and Kant\'e~\cite{courcelle:graph-operations:} have since given such a characterisation). Roughly speaking, rank-width is instead defined in terms of tree-like decompositions (called \emph{layouts}) where each edge has a \emph{width} determined by the rank of a certain submatrix of the adjacency matrix of the graph. 

Critically, from our perspective both rank-width and clique-width behave `the same', in the following sense.

\begin{prop}[Oum and Seymour~{\cite[Proposition~6.3]{oum:approximating-c:}}]\label{prop:cw-rw}For any graph $G$, \[\rwd(G)\leq\cwd(G)\leq 2^{\rwd(G)+1}-1.\]
\end{prop}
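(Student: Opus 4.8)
The plan is to establish the two inequalities separately, by translating between the standard combinatorial descriptions of the two parameters. Recall that a \emph{rank-decomposition} of $G$ is a subcubic tree $T$ whose leaves are in bijection with $V(G)$; each edge $e$ of $T$ induces a bipartition $(A_e,B_e)$ of $V(G)$, its \emph{width} is the rank over $\mathrm{GF}(2)$ of the $A_e\times B_e$ submatrix of the adjacency matrix of $G$, and $\rwd(G)$ is the least possible value of the largest edge-width over all rank-decompositions of $G$. On the other side, every clique-width expression over an alphabet $\Sigma$ has a \emph{parse tree} whose leaves are vertex-introduction operations (one per vertex of $G$), whose unary internal nodes are relabellings and edge-additions, and whose binary internal nodes are disjoint unions; for a node $v$ of such a tree I write $V_v$ for the set of vertices appearing in the subtree below $v$, and $\bar V_v=V(G)\setminus V_v$. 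The elementary fact used in both directions is that a matrix of rank $r$ over $\mathrm{GF}(2)$ has at most $2^r$ distinct rows, and at most $2^r-1$ distinct nonzero rows.

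For $\rwd(G)\le\cwd(G)$, I would take an optimal clique-width expression, on $k=\cwd(G)$ labels, and turn its parse tree into a rank-decomposition by repeatedly suppressing internal nodes of degree at most two; this leaves a subcubic tree in which every internal node has degree three and whose leaves are precisely $V(G)$. It remains to bound the width of each edge $e$ by $k$. Let $(A_e,B_e)$ be the induced bipartition with $A_e$ the vertices below $e$, and partition $A_e$ by the labels its vertices carry at the corresponding node of the parse tree. Relabelling operations only merge label classes as one moves toward the root, so two vertices of $A_e$ sharing a label at this node share a label ever after; moreover every edge between $A_e$ and $B_e$ is created by an edge-addition operation that acts uniformly on all vertices of a given label. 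Hence vertices of $A_e$ lying in a common class have identical neighbourhoods in $B_e$, so the $A_e\times B_e$ submatrix has at most $k$ distinct rows and therefore rank at most $k$. The decomposition thus has width at most $k$, which gives the inequality.

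For $\cwd(G)\le 2^{\rwd(G)+1}-1$, I would take an optimal rank-decomposition of width $k=\rwd(G)$, subdivide an arbitrary edge, root $T$ at the new node, and construct a clique-width expression by processing $T$ from the leaves upwards, maintaining at each node $v$ the following invariant: there is a clique-width subexpression using at most $2^{k+1}-1$ labels that produces a labelled copy of $G[V_v]$ in which the label of each vertex determines its \emph{external type}, that is, its neighbourhood in $\bar V_v$. Since the cut $(V_v,\bar V_v)$ has rank at most $k$, there are at most $2^k$ external types, one of them being the empty type; so at most $2^k-1$ ``real'' labels together with one reserved label for the empty type suffice to record them all. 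At an internal node $v$ with children $v_1,v_2$, one first relabels the two subexpressions so that their real labels occupy disjoint sets of size at most $2^k-1$ while both reuse the single reserved empty-type label --- this is the step at which the total reaches $2(2^k-1)+1=2^{k+1}-1$ --- then forms their disjoint union, and then inserts the edges between $V_{v_1}$ and $V_{v_2}$ by edge-additions. This last step is correct because $V_{v_2}\subseteq\bar V_{v_1}$, so the external type of a vertex of $V_{v_1}$ already determines its adjacency to every vertex of $V_{v_2}$ (and symmetrically), while empty-type vertices on the two sides are mutually non-adjacent and hence never require an edge-addition involving the reserved label. Finally one relabels down to the at most $2^k$ external types with respect to $\bar V_v$ --- this is well defined because each such type is the restriction to $\bar V_v$ of the type recorded at the relevant child --- thereby restoring the invariant without introducing new labels. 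Applying the invariant at the root yields a clique-width expression for $G$ on at most $2^{k+1}-1$ labels.

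The first inequality is essentially a one-line observation; the substance lies in the second, where the difficulty is careful bookkeeping rather than any real depth. One has to choose the invariant so that it composes cleanly under disjoint union, verify that the edge-addition steps behave exactly as needed (which is where ``a restriction of a type is again a type'' is used), and allocate labels tightly enough --- in particular by sharing the empty-type label between the two children --- to arrive at the sharp constant $2^{k+1}-1$ rather than a cruder bound such as $2^{k+1}$ or $2^{2k}$. That this constant cannot be improved is already visible for $k=0$ (edgeless graphs have clique-width $1$) and $k=1$ (distance-hereditary graphs have clique-width $3$).
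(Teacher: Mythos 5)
The paper does not prove this proposition at all: it is imported verbatim from Oum and Seymour (Proposition~6.3 of the cited paper), so there is no in-paper argument to compare against. Your reconstruction is correct and is essentially the standard proof of that result. For $\rwd(G)\leq\cwd(G)$ you use the usual observation that, across any cut of the parse tree, vertices carrying the same label have identical neighbourhoods on the other side, so the cut matrix has at most $\cwd(G)$ distinct rows and hence rank at most $\cwd(G)$; for $\cwd(G)\leq 2^{\rwd(G)+1}-1$ you run the usual bottom-up construction along a rooted rank-decomposition, labelling vertices by their neighbourhood-type across the current cut (at most $2^k$ types since the cut has rank at most $k$), and the one genuinely delicate point -- sharing a single reserved label for the empty type between the two children, which is sound precisely because empty-type vertices never participate in a cross edge-addition -- is handled correctly and is exactly what yields the sharp constant $2^{k+1}-1$ rather than $2^{k+1}$. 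The only cosmetic quibble is that the ``elementary fact'' you invoke for the first inequality is really its companion (rank is at most the number of distinct nonzero rows, rather than rank $r$ implying at most $2^r$ rows), but both facts are trivial and you use each where it is needed.
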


For a graph class $\C$, we say that $\C$ has \emph{bounded clique-width} (or, equivalently, \emph{bounded rank-width}) if there exists $k$ such that $\cwd(G)\leq k$ for all $G\in\C$, otherwise we say that $\C$ has \emph{unbounded clique-width}. Furthermore, if $\C$ has unbounded clique-width but every proper subclass of $\C$ has bounded clique-width, then we say that $\C$ is \emph{minimal} of unbounded clique-width.

In \cite{korpelainen:split:}, it was shown that the class of split permutation graphs has unbounded 
clique-width, and is not well-quasi-ordered by the induced subgraph relation. 
Together with Proposition~\ref{prop:sp-bi} this implies the same conclusions for 
bichain graphs. Indeed, in \cite{kaminski:recent-developm:} it was shown that a single application of the operation of 
transforming a clique into an independent set does not change 
the clique-width of the graph ``too much'', i.e.\ under this operation any class of graphs of bounded clique-width
transforms into a class of bounded clique-width, and vice-versa. 

Taking into account the relationship between these classes revealed in Proposition~\ref{prop:sp-bi}, the rest of this paper will focus on the results for bichain graphs only.  


\section{Universal graphs}\label{sec:universal}


For a graph class $\C$, we say that the graph $G\in\C$ is \emph{$n$-universal} if every $H\in\C$ with $n$ vertices is an induced subgraph of $G$. In this section, we will introduce a number of universal graphs for different graph classes.

\subsection{Bipartite permutation graphs} 
The first class for which we will exhibit a universal graph is the class of bipartite permutation graphs. Although distinct from the two classes of interest to us, it has a crucial role in our proofs.

Denote by $X_{m,n}$ the graph with $mn$ vertices arranged in $m$ columns and $n$ rows, in which any two consecutive columns induce a chain graph -- an example of the graph $X_{6,6}$ is shown on the left of Figure~\ref{fig:H}.

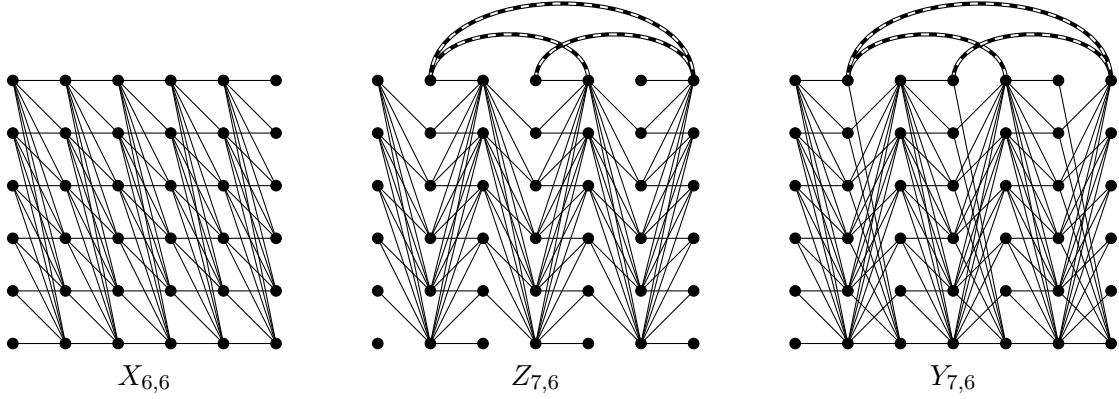
\begin{figure}[ht]
\centering
\begin{tabular}{ccccc}
\begin{tikzpicture}[scale=0.7]
\mnnodearray{6}{6}
\foreach \col in {1,...,5}
	\downhoriz{\col}{6};
\end{tikzpicture}
&\rule{10pt}{0pt}&
\begin{tikzpicture}[scale=0.7]
\curvetop256
\curvetop476
\curvetop276
\mnnodearray{7}{6}
\foreach \col in {1,3,5}
	\downnohoriz{\col}{6};
\foreach \col in {2,4,6}
	\uphoriz{\col}{6};
\end{tikzpicture}
&\rule{10pt}{0pt}&
\begin{tikzpicture}[scale=0.7]
\curvetop256
\curvetop476
\curvetop276
\mnnodearray{7}{6}
\foreach \col in {1,3,5}
	\downhoriz{\col}{6};
\foreach \col in {2,4,6}
	\upnohoriz{\col}{6};
\foreach \baserow in {3,5,7}
	\foreach \i in {1,...,6}
		\draw (\baserow,1) -- (\baserow - 1,\i);
\end{tikzpicture}\\
$X_{6,6}$&&$Z_{7,6}$&&$Y_{7,6}$
\end{tabular}
\caption{Graphs $X_{6,6}$ (left), $Z_{7,6}$ (middle), and $Y_{7,6}$ (right). The graph $Z_{7,6}$ and $Y_{7,6}$ contains the edges shown in the picture and the ``diagonal'' edges connecting every even column $i$ to every odd column $i'\ge i+3$ (these edges are represented by the curved lines at the top of  the picture).}
\label{fig:H}
\end{figure}

The following theorem was proved in \cite{lozin:minimal-universal:}.
\begin{theorem}[Lozin and Rudolf~\cite{lozin:minimal-universal:}]\label{thm:bp-universal}
The graph $X_{n,n}$ is an $n$-universal bipartite permutation graph, i.e. it contains every bipartite permutation graph
with $n$ vertices as an induced subgraph.
\end{theorem}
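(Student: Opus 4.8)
The plan is to reduce the statement to the structure theory of bipartite permutation graphs and then match that structure against the highly regular edge pattern of $X_{n,n}$. By the theorem of Spinrad, Brandst\"{a}dt and Stewart characterising bipartite permutation graphs, a connected bipartite permutation graph $H$ with colour classes $A=\{a_1,\dots,a_p\}$ and $B=\{b_1,\dots,b_q\}$ admits a \emph{staircase} representation: along suitable linear orders of $A$ and of $B$, every neighbourhood $N(a_i)$ is an interval of $B$ and every $N(b_j)$ an interval of $A$, the left endpoints and the right endpoints of these intervals are non-decreasing along the orders, and consecutive intervals overlap or abut (the last point precisely because $H$ is connected). The same circle of ideas shows $X_{n,n}$ is itself a bipartite permutation graph: it is bipartite since all its edges run between consecutive columns, and a permutation diagram can be read straight off the grid, the monotone edge rule making the segments of one colour class pairwise non-crossing. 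It is also immediate from the construction that $X_{m,m}$ is an induced subgraph of $X_{n,n}$ whenever $m\le n$ (take the first $m$ columns and the first $m$ rows), so that proving the embedding into $X_{m,m}$ for $m=|V(H)|$ suffices.

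For a connected $H$ on $m\le n$ vertices, I would build an embedding into $X_{m,m}$ by ``threading'' its staircase through the columns: the vertices of $A$ are distributed among columns of one parity and those of $B$ among columns of the other, respecting the two linear orders, and the row of each vertex is chosen to record how far its neighbourhood reaches into the two adjacent columns on either side. What makes this possible is the edge rule of $X_{m,m}$ --- a vertex in column $c$ and row $r$ is joined to rows $1,\dots,r$ of column $c+1$ and to rows $r,\dots,m$ of column $c-1$ --- so the bipartite graph induced by any two consecutive columns is exactly the $m$-vertex universal chain graph, and the staircase adjacency pattern of $H$ is assembled from such chain-graph ``layers''. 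The monotonicity of the staircase endpoints is the crucial hypothesis: it lets the row assignment of one column be extended consistently to the next, and it forces vertices landing in non-consecutive columns to be non-adjacent in $H$ (in the strong ordering the later, respectively earlier, neighbours of a vertex $v$ form a prefix, respectively suffix, among the opposite colour class, so any ``distant'' non-neighbour of $v$ is screened off by a genuine non-neighbour sitting in the column immediately next to that of $v$).

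The step I expect to be the real obstacle is exactly this threading, and the reason is that a single column of $X_{m,m}$ can carry several vertices only if their neighbourhoods in each neighbouring column are \emph{nested}, whereas along the staircase the neighbourhoods of consecutive same-class vertices are only monotone \emph{intervals}. Thus when one colour class is much larger than the other (think of a star), a run of same-class vertices must be spread over several columns of that parity, and one must then interleave the columns of the other parity so that all the required adjacencies --- and \emph{only} those --- are realised; a naive greedy assignment can paint itself into a corner, so the content of the argument is a single global rule for this distribution of vertices to columns and rows, together with a proof that it never exhausts the $m$ available rows or columns. This is the technical heart of Lozin and Rudolf's proof. Once it is in place, the disconnected case is routine: lay the components out in disjoint blocks of consecutive columns of $X_{n,n}$, push the vertices of each block toward a fixed corner so that no edges appear across block boundaries, and verify the column budget $n$ suffices, using that a component on $t$ vertices needs at most $t$ columns and far fewer when it is close to being a chain graph.
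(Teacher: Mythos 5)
This theorem is not proved in the paper you are working from: it is imported verbatim from Lozin and Rudolf~\cite{lozin:minimal-universal:}, so there is no internal argument to compare against. Judged on its own terms, your proposal is a plan rather than a proof, and you say so yourself. You correctly assemble the standard ingredients --- the strong-ordering (``staircase'') characterisation of bipartite permutation graphs, the observation that consecutive columns of $X_{n,n}$ induce the $n$-vertex universal chain graph, and the monotonicity of neighbourhood endpoints as the property that should drive the embedding --- but the one step that constitutes the theorem, namely an explicit rule assigning the vertices of $H$ to columns and rows of $X_{m,m}$ together with a verification that exactly the edges of $H$ are realised and that $m$ rows and $m$ columns suffice, is exactly the step you defer (``This is the technical heart of Lozin and Rudolf's proof''). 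Reducing the statement to its own hardest step is not a proof of it; in particular you never resolve the difficulty you yourself flag, that vertices sharing a column must have nested neighbourhoods in the adjacent columns while the staircase only guarantees monotone intervals, which is precisely where a greedy assignment can fail.

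Two smaller points. First, the ``screening'' claim --- that monotonicity forces vertices placed in non-consecutive columns to be non-adjacent in $H$ --- is stated but not argued, and it is not automatic: it depends on how your (unspecified) threading rule interleaves the two colour classes, so it must be proved jointly with that rule. Second, the disconnected case is not quite as routine as claimed: in $X_{n,n}$ a vertex in column $c$ and row $j$ is adjacent to every vertex of column $c+1$ in rows $1,\dots,j$ (in the orientation of Figure~\ref{fig:H}), so two components placed in adjacent blocks of columns are not automatically edge-free across the boundary. Your ``push toward a corner'' idea (last column of one block in low rows, first column of the next in strictly higher rows) can be made to work, but it consumes rows, so the row and column budgets interact and the bookkeeping needs to be carried out, not asserted. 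As it stands, the proposal identifies the right framework but leaves the theorem's content unproved.
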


We call any graph of the form $X_{n,n}$ an \emph{$X$-grid}. Since the $X$-grids are universal, the following theorem allows us to conclude that the bipartite permutation graphs are minimal with unbounded clique-width.

\begin{theorem}[Lozin~\cite{lozin:minimal-classes:}]\label{thm:bp-clique-width}
For every $n$, the clique-width of $X_{n,n}$-free bipartite permutation graphs is bounded by a constant.
\end{theorem}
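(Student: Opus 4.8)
\medskip\noindent\textbf{Proof strategy.}
The plan is to combine the classical description of bipartite permutation graphs via \emph{strong orderings} with a Ramsey-type argument showing that forbidding the universal grid $X_{n,n}$ forces the associated ``staircase'' to be globally simple. Recall the well-known fact that a connected bipartite graph $G$ with parts $A$ and $B$ is a bipartite permutation graph if and only if $A$ and $B$ admit linear orders $a_1<\dots<a_p$, $b_1<\dots<b_q$ under which every $N(a_i)$ is an interval $[\alpha_i,\beta_i]$ of $B$, every $N(b_j)$ is an interval of $A$, and the endpoint sequences are monotone, $\alpha_1\le\dots\le\alpha_p$ and $\beta_1\le\dots\le\beta_p$ (and symmetrically on the $B$-side). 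Equivalently, the bipartite adjacency matrix, with rows and columns listed in these orders, is a \emph{double staircase}: its $1$s form a connected band that is monotone both along the rows and along the columns. Since the clique-width of a graph is the maximum of the clique-widths of its connected components, it suffices to bound $\cwd(G)$ for connected $X_{n,n}$-free bipartite permutation graphs $G$.

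The combinatorial heart of the argument is a claim of the form: there is a function $f$ such that whenever the band of a connected bipartite permutation graph can be cut along its diagonal into $f(n)$ consecutive \emph{thick} pieces --- informally, $f(n)$ successive chain-graph blocks, each realising a full $n\times n$ staircase and each mutually non-adjacent with the non-consecutive ones --- then $G$ already contains $X_{n,n}$ as an induced subgraph. I would prove this (in contrapositive form) by a regularisation: given a long run of such pieces, repeatedly apply pigeonhole together with the monotonicity of $(\alpha_i)$ and $(\beta_i)$ to thin the pieces down to a subsequence in which consecutive pieces overlap exactly as two adjacent columns of an $X$-grid do, and non-consecutive pieces are entirely non-adjacent; then the columns of $X_{n,n}$ can be read off directly. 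Thus $X_{n,n}$-freeness guarantees that the staircase of $G$ splits into a (possibly very long) path of chain-graph blocks in which no $f(n)$ consecutive blocks are simultaneously thick.

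Finally, I would convert this structural bound into a clique-width expression by sweeping along the diagonal of the band and introducing vertices in the resulting interleaved order $a_1,b_1,a_2,b_2,\dots$. The key observation is that, because no $f(n)$ consecutive blocks are thick, at every stage the already-introduced vertices that still have a neighbour to come fall into only $O(f(n))$ \emph{attachment classes} --- two such vertices being in the same class when they are adjacent to exactly the same set of not-yet-introduced vertices (all these future neighbourhoods are initial segments of what remains, so each class is an interval in the relevant order and the number of classes is controlled by the number of thick blocks still ``active''). One then maintains one label per attachment class plus a constant number of auxiliary labels: each new vertex is added with a fresh label, joined by operation~(2) to exactly the classes it is adjacent to, and vertices that have become dead, or classes that have merged, are recoloured by operation~(3). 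This builds $G$ with $g(f(n))$ labels for a suitable function $g$, whence $\cwd(G)\le g(f(n))$, a constant depending only on $n$.

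The step I expect to be the main obstacle is the regularisation in the second paragraph: one has to control simultaneously the thickness of each selected block, ensure that consecutive selected blocks meet in a genuine (non-degenerate) staircase rather than in a sparse sub-pattern, and --- the delicate point --- guarantee that blocks which are not consecutive in the selected sequence induce \emph{no} edges at all, which forces an iterated, carefully bookkept use of the endpoint monotonicity and of Ramsey/pigeonhole to ``separate'' the blocks before reading off $X_{n,n}$.
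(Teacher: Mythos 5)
You should first note that the paper does not prove this statement at all: it is imported from Lozin~\cite{lozin:minimal-classes:}, and the scheme of that proof is reproduced in the paper's appendix for the analogous $Z$-grid statement. That route is quite different from yours: one uses universality (Theorem~\ref{thm:bp-universal}) to regard every bipartite permutation graph as an induced subgraph of a large grid $X_{N,N}$, cuts the grid into blocks of $n$ consecutive columns, shows by a disjoint-paths/min-cut argument that $X_{n,n}$-freeness lets each block be split into two sides so that every block and every prefix of the resulting partition has at most polynomially-in-$n$ many classes of vertices with equal neighbourhoods outside it, and then concludes with the lemma that $\cwd(G)\le k\cdot\ell$ whenever $V(G)$ is partitioned into sets $V_i$ with $\cwd(G[V_i])\le k$ and at most $\ell$ such similarity classes for each $V_i$ and each prefix $V_1\cup\dots\cup V_i$.

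Measured against that, your proposal has two genuine gaps, and they are precisely its load-bearing steps. First, the regularisation claim --- that $f(n)$ consecutive ``thick'' diagonal pieces force an induced $X_{n,n}$ --- is essentially the whole theorem, and you only sketch an intention (pigeonhole plus endpoint monotonicity), flagging it yourself as the main obstacle; moreover the global structure you want to extract from it, a path of chain-graph blocks in which non-consecutive blocks are completely non-adjacent, is not supplied by a strong ordering: neighbourhood intervals can straddle arbitrarily many diagonal blocks (already for large bicliques, which are $X_{3,3}$-free), so in general the only admissible decompositions of this kind consist of a few huge blocks, and then the condition ``no $f(n)$ consecutive thick blocks'' carries no information. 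Second, the sweep construction rests on the unproved assertion that at every cut the active vertices fall into $O(f(n))$ attachment classes. A future neighbourhood has the form $[\max(\alpha_i,c+1),\beta_i]$, an interval rather than an initial segment of what remains, so the number of classes at a cut is governed by the number of distinct endpoints among active vertices, and nothing you have established bounds this by a function of the number of thick blocks --- in the degenerate one-big-block situation above it would have to be proved from scratch for a block that may itself be thick. This bounded-classes-at-every-prefix property is exactly what the cited proof manufactures via the grid embedding, the column-block decomposition and the flow/cut lemma; until you prove quantitative versions of both of your claims, the proposal is a plausible programme rather than a proof.
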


In our work here, we will need the following technical lemma to control how some $X$-grid can embed into a larger one. We remark that the argument used here (which employs the pigeonhole principle) may be unnecessarily wasteful, but this is of no consequence for our purposes.

\begin{lemma}\label{lem:X-embeddings}For any $n\geq1$, every embedding of the grid $X_{n,4n-1}$ into a larger $X$-grid $X_{M,N}$ contains a copy of $X_{n,n}$ that occupies exactly $n$ contiguous columns, with each column of $X_{n,n}$ embedding into a column of $X_{M,N}$.\end{lemma}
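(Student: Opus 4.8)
The plan is to understand the structure of $X$-grids well enough to track how a smaller grid sits inside a larger one, and then extract a sub-grid that is "column-aligned". First I would fix some notation for the larger grid $X_{M,N}$: label its vertices $v_{i,j}$ where $i\in\{1,\dots,M\}$ is the column index and $j\in\{1,\dots,N\}$ is the row index, with two consecutive columns $i,i+1$ inducing a chain graph (say, with the neighbourhoods nested so that $v_{i,j}$ is adjacent to $v_{i+1,1},\dots,v_{i+1,j}$, matching the $\downhoriz$ convention in Figure~\ref{fig:H}). The key structural observations I would record are: (i) within a single column of $X_{M,N}$ there are no edges, so any independent "column" of the image of $X_{n,4n-1}$ must map into a union of columns; (ii) vertices in non-consecutive columns of $X_{M,N}$ are non-adjacent, while consecutive columns carry a chain; (iii) the vertices of a single row of $X_{n,4n-1}$ form an induced path (or close to one) crossing all $n$ columns, and this forces the images of distinct columns of $X_{n,4n-1}$ to land in distinct, and in fact \emph{consecutive}, columns of $X_{M,N}$ — otherwise the cross-column adjacencies demanded by the grid structure could not be realised.

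Granting (i)–(iii), the embedding $\phi$ of $X_{n,4n-1}$ induces a map on column indices: column $c$ of $X_{n,4n-1}$ (for $c=1,\dots,n$) maps into column $\sigma(c)$ of $X_{M,N}$, and $\sigma$ is strictly monotone with $\sigma(c+1)=\sigma(c)+1$ for each $c$ (by the consecutiveness forced in (iii)). So the image already occupies exactly $n$ contiguous columns of $X_{M,N}$, with each column of $X_{n,4n-1}$ landing in one column of $X_{M,N}$. It then remains to pass from the $4n-1$ rows to $n$ rows in such a way that the retained vertices, together with the ambient chain structure of $X_{M,N}$ restricted to those $n$ columns and those $n$ rows, form a copy of $X_{n,n}$.

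This is where the pigeonhole argument comes in, and I expect it to be the main obstacle. Inside each of the $n$ chosen columns of $X_{M,N}$ we have $4n-1$ marked vertices (the images of that column of $X_{n,4n-1}$), sitting at some row positions; but a priori different columns of $X_{n,4n-1}$ could be "interleaved" differently with the true rows of $X_{M,N}$. The idea is to first fix attention on one column — say the image of column $1$ — pick its $4n-1$ marked vertices, and use the chain structure between consecutive columns together with the pigeonhole principle to select a subset of $n$ of these row-levels on which \emph{all} $n$ columns behave consistently, i.e.\ on which the induced subgraph on the selected $n\times n$ array is precisely the "staircase" pattern of $X_{n,n}$. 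Concretely: the $4n-1$ marked rows of column $1$, read off against the adjacency to column $2$'s marked vertices, partition into at most... (the wasteful bound the authors allude to) blocks; iterating this across all $n-1$ consecutive pairs and applying pigeonhole repeatedly lets us keep $n$ rows that are "generic" for every pair, and on those $n$ rows each consecutive pair of the retained columns induces exactly a chain on $n+n$ vertices with all the required nestings, so the retained $n\times n$ array is $X_{n,n}$. The budget $4n-1 = n + (n-1)\cdot\text{(something)} + \dots$ is exactly what makes this iterated pigeonholing go through; checking that the bound $4n-1$ suffices (and that, as remarked, one could be more frugal) is the routine-but-fiddly calculation I would not grind through here.

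The one genuine subtlety to nail down carefully is step (iii): ruling out "degenerate" embeddings where a column of $X_{n,4n-1}$ is split across two columns of $X_{M,N}$, or where the image skips a column, or where two columns of the small grid collapse into one. Each such pathology can be excluded by exhibiting a small forbidden configuration — for a split column one finds two of its (non-adjacent) vertices with incomparable neighbourhoods toward an adjacent column, contradicting the chain property of $X_{M,N}$; for a skipped column one finds adjacent vertices of $X_{n,4n-1}$ whose images lie in non-consecutive columns of $X_{M,N}$, hence non-adjacent. Making sure $4n-1$ rows (rather than, say, $3$) are genuinely available for these arguments is why the lemma is stated with that many rows, and having so much room makes each of these exclusions comfortable rather than tight.
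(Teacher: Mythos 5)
There is a genuine gap, and it sits exactly where you predicted the ``subtlety'' would be: step (iii) is false. A column of $X_{n,4n-1}$ need \emph{not} embed into a single column of $X_{M,N}$, and your proposed forbidden-configuration argument does not rule this out. Two vertices of the same small-grid column have nested neighbourhoods, and this nestedness only has to be realised on the \emph{image} vertices; a vertex whose neighbourhood in the adjacent small-grid column contains all of the relevant image vertices (for instance the extreme vertex of a column, which is adjacent to everything selected in the neighbouring column) can perfectly well be placed in the big-grid column two to the left (or right) of where the rest of its column sits, at a row high enough to dominate the images it must see. Its neighbourhood there is an initial/final segment of that column rather than of the ``expected'' column, but no incomparability with the chain structure of $X_{M,N}$ arises. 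Indeed the paper's proof explicitly accommodates this: it only shows that each small-grid column occupies at most \emph{two} big-grid columns, of the same parity and two apart (via the common-neighbour/bipartiteness argument), and it notes that the bottom vertex $v_{2n}^{(2)}$ may legitimately land in the column to the left of $c_1$. So the column-index map $\sigma$ with $\sigma(c+1)=\sigma(c)+1$ that your whole argument is built on does not exist in general, and the monotone-consecutive framework collapses.

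Your row-selection step is also not the right use of the pigeonhole, and the budget accounting does not work as you sketch it. Iterating a pigeonhole argument over all $n-1$ consecutive column pairs would degrade the number of usable rows multiplicatively, and $4n-1$ is nowhere near enough for that. The paper uses pigeonhole exactly once: since each small-grid column meets at most two big-grid columns, at least $2n$ of its $4n-1$ vertices land in a single column. Fixing $2n$ such vertices $v_1,\dots,v_{2n}$ of the leftmost column (embedded in order, after possibly rotating the grid by $180^\circ$), one then propagates to the right: the nested-neighbourhood pattern forces $v_1^{(j)},\dots,v_{2n-j+1}^{(j)}$ into the $(j-1)$st column to the right of $c_1$, losing at most one row per column. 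After $n$ columns one still has at least $n$ rows, yielding the aligned copy of $X_{n,n}$; this additive loss, $2n-(n-1)\geq n$, is what the bound $4n-1$ is really paying for. To repair your proposal you would need to abandon (iii), prove the two-columns-of-the-same-parity claim instead, and then run some version of this shrinking induction.
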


\begin{proof}
First observe that the case $n=1$ is trivial, so now we may assume that $n>1$.  Fix an arbitrary embedding of $X_{n,4n-1}$ into $X_{M,N}$ with $M\geq n$ and $N\geq 4n-1$. We first claim that the entries of any column of $X_{n,4n-1}$ must occupy at most two columns of $X_{M,N}$, and that these two columns have exactly one column separating them. 

To see this, let $u,v$ be two distinct vertices in the same column of $X_{n,4n-1}$. Observe that there exists at least one vertex $w$ in $X_{n,4n-1}$ that is adjacent to both $u$ and $v$ (for example, the vertex at the bottom of the column immediately to the right of $u$ and $v$). This tells us that in the embedding into $X_{M,N}$, the vertices $u$ and $v$ must both be placed in a column of the same parity, since $X_{M,N}$ is bipartite. Furthermore, $u$ and $v$ can have at most one column between them otherwise they can have no common neighbourhood. This proves the claim.

Thus, for any column of $X_{n,4n-1}$ we have that at least $2n$ of the vertices are embedded into the same column of $X_{M,N}$, by the pigeonhole principle. Let $v_1,\dots,v_{2n}$ be $2n$ vertices from the leftmost column of $X_{n,4n-1}$, ordered from top to bottom, that embed in one column $c_1$ of $X_{M,N}$. Note that because the neighbourhoods of $v_1,\dots,v_{2n}$ are ordered by inclusion, they must embed in $X_{M,N}$ either in the correct order, or in reverse. Appealing to the automorphism of $X$-grids obtained by rotating our standard picture by $180^\circ$, we may assume they embed in the correct order.

Now let $v_i^{(j)}$ be the vertex of $X_{n,4n-1}$ that is in the same row as $v_i$, and in the $j$th column from the left (thus $v_i = v_i^{(1)}$). Note that $v_i^{(j+1)}$ is adjacent to all of $v_1^{(j)},\dots,v_{i}^{(j)}$, and none of $v_{i+1}^{(j)},\dots,v_{2n}^{(j)}$. Furthermore, because $v_1,\dots,v_{2n}$ were embedded in $X_{M,N}$ from top to bottom, we conclude that the only possible column of $X_{M,N}$ in which $v_1^{(2)},\dots,v_{2n-1}^{(2)}$ can embed is the one immediately to the right of $c_1$, and again they must embed from top to bottom. (Note that $v_{2n}^{(2)}$ is adjacent to all of $v_1,\dots,v_{2n}$ and can thus be embedded in the column to the left of $c_1$.)

If $n=2$ we are now done, so assume $n>2$. For $i=3,\dots,n$ we follow  similar argument to $i=2$, and conclude inductively that because $v_1^{(i-1)},\dots,v_{2n-i+2}^{(i-1)}$ must embed from top to bottom in the $(i-2)$nd column to the right of $c_1$, then $v_1^{(i)},\dots,v_{2n-i+1}^i$ must embed in the $(i-1)$th column. However, this shows that the set $\{v_i^{(j)}:1\leq i,j \leq n\}$ is a copy of $X_{n,n}$ inside $X_{M,N}$ satisfying the statement of the lemma.
\end{proof}

\subsection{Bichain graphs}

For the class of bichain graphs, we will now introduce two universal constructions. 

Denote by  $Z_{n,k}$ the graph with the vertex set 
$\{z_{i,j}\ :\  1 \leq i \leq n, 1 \leq j \leq k\}$ and with $z_{i,j}z_{i',j'}$ being an edge if and only if
\begin{enumerate}
\item[(1)] $i$ is odd, $i'=i+1$ and $j > j'$,
\item[(2)] $i$ is even, $i'=i+1$ and $j \leq j'$,
\item[(3)] $i$ is even, $i'$ is odd and $i' \geq i+3$.
\end{enumerate}  

We call the edges of type (3) the \emph{diagonal} edges. An example of the graph $Z_{n,k}$ 
with $n=7$ and $k=6$ is represented in the middle picture of Figure~\ref{fig:H}, where for clarity the diagonal edges are represented by the curved lines at the top. We have: 

\begin{theorem}[Brignall, Lozin and Stacho~\cite{bls:bichain:}]
\label{thm:universal}
The graph $Z_{n,n}$ is an $n$-universal bichain graph.
\end{theorem}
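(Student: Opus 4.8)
The plan is to check first that $Z_{n,n}$ itself is a bichain graph, and then to embed an arbitrary $n$-vertex bichain graph into it by placing its ``two chains per side'' structure into the columns of $Z_{n,n}$. Write the odd-indexed columns of $Z_{n,n}$, from left to right, as $P_1,P_2,\dots$ and the even-indexed ones as $Q_1,Q_2,\dots$. A direct check from the definition of the edges (including the diagonal ones) yields the trichotomy that the bipartite graph induced between $P_p$ and $Q_q$ is \emph{complete} when $q\le p-2$, is \emph{empty} when $q\ge p+1$, and is a \emph{staircase} (a chain-graph pattern) when $q\in\{p-1,p\}$. From this it is immediate that $Z_{n,n}$ is a bichain graph: partition the odd columns into those of column-index $\equiv 1$ and those $\equiv 3\pmod 4$, and the even columns into those $\equiv 2$ and those $\equiv 0\pmod 4$; within each of these four families any two columns have their $P$- (respectively $Q$-) index differing by at least two, so by the trichotomy their vertex neighbourhoods are nested, and since the rows of a single column are nested too, each family is a chain --- so $Z_{n,n}$ lies in the class, as an $n$-universal graph must.

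Next I would fix a normal form for the bichain graph $G$ to be embedded. Let $A,B$ be its parts, fix decompositions $A=A^1\sqcup A^2$ and $B=B^1\sqcup B^2$ into chains, and order each of the four chains by increasing neighbourhood. The one elementary observation needed is that for every $a\in A$ and every $d\in\{1,2\}$, the set $N(a)\cap B^d$ is a \emph{final} segment of the ordered chain $B^d$: if $a\sim b$ and $b'$ comes later in $B^d$ then $b\in N(b')$, hence $a\sim b'$. Symmetrically, $N(b)\cap A^c$ is a final segment of $A^c$ for every $b\in B$. Consequently the bipartite adjacency matrix of $G$, split into four blocks $M_{cd}$ recording the $A^c$--$B^d$ edges, has each block a monotone (``staircase'') $0$--$1$ matrix, where $M_{c1}$ and $M_{c2}$ share the row order of $A^c$ and $M_{1d}$ and $M_{2d}$ share the column order of $B^d$.

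The heart of the proof is the placement of these four chains into the columns of $Z_{n,n}$, chosen as a subsequence of $P_1,Q_1,P_2,Q_2,\dots$, with $A^1,A^2$ in odd columns and $B^1,B^2$ in even ones. Once the columns are fixed, the rows within each column can be chosen to realise the thresholds of the relevant blocks $M_{cd}$, since by the normal form those thresholds are monotone along each chain and can therefore be met simultaneously, and there is room because $G$ has only $n$ vertices. The obstacle is the rigidity of the trichotomy: an odd column has a genuine staircase interaction with only the \emph{two} even columns adjacent to it, being complete to everything farther left and empty to everything farther right, so one cannot hope to realise all four blocks as genuine staircases with one column per chain, and a more flexible placement is needed. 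The resolution I would pursue is to split each chain at the thresholds where its vertices become complete to $B^1$, to $B^2$, or to all of $B$, placing the ``complete'' pieces far to the right (so that the diagonal edges of $Z_{n,n}$ supply those edges automatically) and the ``small-neighbourhood'' pieces in columns positioned to generate the required staircases; the freedom to relabel $A^1\leftrightarrow A^2$ and $B^1\leftrightarrow B^2$ reduces the number of cases. I expect this to be carried out by an induction on $|V(G)|$, each step peeling off a vertex that (after the normal form) is complete to a maximal left-block of the other side, with the bichain forbidden subgraphs $3K_2,C_6,P_7$ entering precisely to exclude the three- and four-chain configurations that admit no such left-to-right layout.

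Finally one checks that the map produced is an embedding, block by block: ``complete'' blocks are carried by column pairs with $q\le p-2$, ``empty'' blocks by $q\ge p+1$, and each staircase block by an adjacent pair of columns, using that the chosen rows meet all the (monotone) thresholds along each chain simultaneously. The step I expect to be the main obstacle is this column placement --- proving that the pieces of the four chains can always be arranged along a left-to-right sequence of columns so as to match the complete/staircase/empty skeleton of $Z_{n,n}$; the two preliminary checks and the final block-by-block verification are then routine.
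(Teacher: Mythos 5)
Your preliminary work is sound: the trichotomy you state for the interaction between an odd column and an even column of $Z_{n,n}$ (complete when the even column is at least two positions behind, empty when it is ahead, staircase for the two adjacent positions) is correct, and your mod-$4$ grouping of columns does verify that $Z_{n,n}$ is itself a bichain graph; the four-chain normal form for a general bichain graph $G$, with each neighbourhood a final segment of each chain, is also fine. But the theorem's actual content --- that \emph{every} $n$-vertex bichain graph embeds into $Z_{n,n}$ --- is exactly the step you leave unproven. You correctly identify that one cannot simply place the four chains into four columns, because a column of $Z_{n,n}$ has a genuine staircase interaction with only its two neighbouring columns, whereas in $G$ each chain $A^c$ interacts by a staircase with \emph{both} $B^1$ and $B^2$, and the two row-orders this imposes on $A^c$ need not be realisable from a single column. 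Your proposed remedy (splitting chains at thresholds, pushing ``complete'' pieces rightwards, and an induction peeling off a vertex complete to a maximal left-block, with $3K_2$, $C_6$, $P_7$ ruling out bad configurations) is only a plan: you do not show that such a vertex exists, that the split pieces can always be linearly arranged to match the complete/staircase/empty skeleton, or how the forbidden subgraphs enter, and you yourself flag this column placement as the main obstacle. As it stands the proposal proves membership of $Z_{n,n}$ in the class and sets up notation, but not universality.

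For comparison, the paper you are reading does not prove this statement at all; it imports it from Brignall, Lozin and Stacho, where universality is obtained via the geometric (permutation-like) model of bichain graphs developed there, rather than by a direct combinatorial induction on the four-chain decomposition. So the missing placement argument is not a routine verification one could leave implicit --- it is the heart of the theorem, and filling it along the lines you sketch would require substantial new work (in particular a precise structural lemma guaranteeing the left-to-right layout you hope for).
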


Any graph  of the form $Z_{n,k}$ will be called a \emph{$Z$-grid}. 

The other universal construction we need is another, related, grid construction which we will call the \emph{$Y$-grid}. Denote by $Y_{n,k}$ the graph with the vertex set 
$\{y_{i,j}\ :\  1 \leq i \leq n, 1 \leq j \leq k\}$ and with $y_{i,j}y_{i',j'}$ being an edge if and only if
\begin{enumerate}
\item[(1)] $i$ is odd, $i'=i+1$ and $j \geq j'$,
\item[(2)] $i$ is even, $i'=i+1$ and $j < j'$,
\item[(3)] $i$ is even, $i'$ is odd and $i' \geq i+3$,
\item[(4)] $i$ is odd, $i'=i-1$ and $j=1$.
\end{enumerate}

The graph $Y_{7,6}$ is shown on the right of Figure~\ref{fig:H}. Note that the edges of type (4) simply connect the odd-column vertices in the bottom row to all the vertices in the preceding column. The reason for this anomaly will become evident in the next section, for the now we will simply refer to all the vertices that lie in row 1 of a $Y$-grid as the vertices of the \emph{bottom row}.

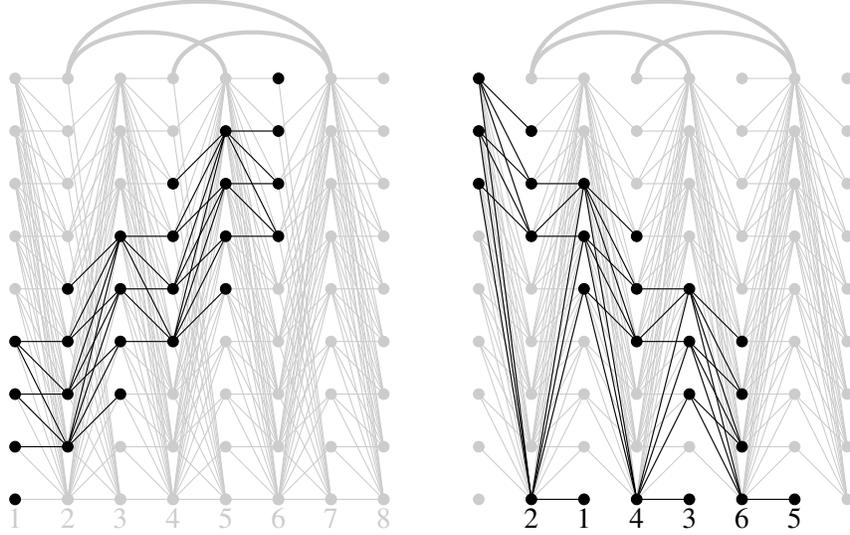
\begin{figure}[ht]
\centering
\begin{tabular}{ccc}
\begin{tikzpicture}[scale=0.7]
{%
\tikzstyle{every node}=[circle, draw=black!20, fill=black!20,
                        inner sep=0pt, minimum width=4pt]
\tikzset{every path/.append style={draw=black!20}}
\foreach \colA/\colB in {4/7,2/5,2/7}
	\curvetop{\colA}{\colB}{9};
\mnnodearray{8}{9}
\foreach \col in {1,3,5,7}
	\downhoriz{\col}{9};
\foreach \col in {2,4,6}
	\upnohoriz{\col}{9};
\foreach \i/\j in {6/7,4/5,2/3}
	\foreach \k in {1,...,9}
		\draw (\i,\k) -- (\j,1);
{\color{black!20}\labelrow{1}{8}}
}%
\foreach \i in {1,...,6}
	\foreach \j in {0,...,3}
		\node at (\i,\i+\j) {};
\foreach \i in {1,3,5}
{
	\draw (\i+1,\i+1) -- (\i,\i+2) -- (\i+1,\i+2) -- (\i,\i+3);
	\draw (\i+1,\i+3) --(\i,\i+3) -- (\i+1,\i+1) -- (\i,\i+1);
}
\foreach \i in {2,4}
{
	\draw (\i+1,\i+1) -- (\i,\i) -- (\i+1,\i+2) -- (\i,\i+1) -- (\i+1,\i+3) -- (\i,\i+2) -- (\i+1,\i+4) -- (\i,\i+3);
	\draw (\i,\i+1) -- (\i+1,\i+4) -- (\i,\i) -- (\i+1,\i+3);
}
\end{tikzpicture}
&\rule{5pt}{0pt}&
\begin{tikzpicture}[scale=0.7]
{%
\tikzstyle{every node}=[circle, draw=black!20, fill=black!20,
                        inner sep=0pt, minimum width=4pt]
\tikzset{every path/.append style={draw=black!20}}
\foreach \colA/\colB in {4/7,2/5,2/7}
	\curvetop{\colA}{\colB}{9};
\mnnodearray{8}{9}
\foreach \col in {1,3,5,7}
	\downnohoriz{\col}{9};
\foreach \col in {2,4,6}
	\uphoriz{\col}{9};
}%
\foreach \i in {1,...,6}
	\foreach \j in {0,...,2}
		\node at (\i,10-\i-\j) {};
\foreach \i in {2,4,6}
{
	\node at (\i,1) [label=below:\i] {};
	\draw (\i,1) -- (\i+1,1);
}
\foreach \i in {1,3,5}
{
	\node at (\i+2,1) [label=below:\i] {};
	\draw (\i+1,9-\i) -- (\i, 10-\i) -- (\i+1,7-\i) -- (\i,8-\i) -- (\i+1,1);
	\draw (\i, 10-\i) -- (\i+1,8-\i) -- (\i, 9-\i) -- (\i+1,7-\i);
	\draw (\i, 9-\i) -- (\i+1,1) -- (\i,10-\i);
}
\foreach \i in {2,4}
{
	\draw (\i,9-\i) -- (\i+1,9-\i) -- (\i,8-\i) -- (\i+1, 8-\i) -- (\i,1);
	\draw (\i+1,9-\i) -- (\i,1) -- (\i+1,7-\i);
}
\end{tikzpicture}
\end{tabular}
\caption{On the left, embedding $Z_{6,4}$ in $Y_{8,9}$. On the right, embedding $Y_{6,4}$ in $Z_{8,9}$.}
\label{fig:Z-Y-embeds}
\end{figure}

$Y$-grids and $Z$-grids are strongly related. Figure~\ref{fig:Z-Y-embeds} demonstrates how $Y_{8,9}$ contains an induced $Z_{6,4}$, and how $Z_{8,9}$ contains an induced $Y_{6,4}$. Indeed, this example easily generalises to the following observation.

\begin{lemma}\label{lem:ZY}
The graph $Z_{2n,2n}$ contains $Y_{n,n}$ as an induced subgraph, 
and the graph $Y_{2n,2n}$ contains $Z_{n,n}$ as an induced subgraph. 

Moreover, $Z_{n,n}$ can be embedded in $Y_{2n,2n}$ in such a way as to use none of the vertices in the bottom row.\qed
\end{lemma}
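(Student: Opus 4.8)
The plan is to write down all three embeddings explicitly — these are exactly the pictures in Figure~\ref{fig:Z-Y-embeds}, extrapolated — and then to verify that each is an induced embedding by a routine case analysis: for each ordered pair of image vertices one reads off from their column and row indices which of the (at most four) adjacency conditions of the host grid can apply, and checks it against the corresponding condition in the guest grid. I would treat the two directions separately, doing the easy one first.

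For the assertion that $Y_{2n,2n}$ contains $Z_{n,n}$ using no bottom-row vertex (which contains the second claim of the lemma as a special case), I would use the map $z_{i,j}\mapsto y_{i,i+j}$ for $1\le i,j\le n$. All image rows $i+j$ lie in $\{2,\dots,2n\}$, so the bottom row of $Y_{2n,2n}$ is avoided and the type-(4) edges of the $Y$-grid never arise; it then remains only to match up types (1), (2), (3). Shifting row $j$ of column $i$ up by $i$ converts the strict rule "$j>j'$" governing odd-to-even adjacency in the $Z$-grid into "$i+j\ge i+1+j'$", i.e.\ the $Y$-grid's "$\ge$", and symmetrically converts "$j\le j'$" into "$<$"; the diagonal edges are untouched because the column indices are untouched. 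So this direction is immediate.

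For the assertion that $Z_{2n,2n}$ contains $Y_{n,n}$, the vertices $y_{i,j}$ with $j\ge 2$ can again be placed by a diagonal shift, say $y_{i,j}\mapsto z_{i,n-i+j}$, which by the same computation turns the $Y$-grid's "$\ge$"/"$<$" rules into the $Z$-grid's "$>$"/"$\le$" rules and leaves the diagonals alone. The obstacle — really the only content of the lemma — is the bottom row of $Y_{n,n}$: for odd $i$ the vertex $y_{i,1}$ must be adjacent to all of column $i-1$ (a type-(4) edge), and this adjacency is not produced by the naive shift. The fix, read off from the right-hand picture of Figure~\ref{fig:Z-Y-embeds}, is to reroute bottom-row vertices to row~$1$ of the $Z$-grid: send $y_{i,1}$ to $z_{i,1}$ when $i$ is even, and to $z_{i+2,1}$ when $i$ is odd (with $y_{1,1}$ going to $z_{3,1}$, since $z_{1,1}$ is isolated). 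Two observations make this succeed. First, a vertex in row~$1$ of an odd column $c$ of a $Z$-grid has no type-(1) neighbour in column $c+1$ (because $1>j'$ never holds), which eliminates the unwanted forward adjacencies, and its only type-(2) neighbour in column $c-1$ is the row-$1$ vertex there — which, when $c=i+2$, is exactly the image $z_{i+1,1}$ of $y_{i+1,1}$, the one neighbour of $y_{i,1}$ in the next column. Second, $z_{c,1}$ is joined by diagonal edges to every vertex of every even column $\le c-3$; with $c=i+2$ these are columns $2,4,\dots,i-1$, matching the union of the type-(3) and type-(4) neighbourhoods of $y_{i,1}$. A mirror-image argument handles $y_{i,1}$ for $i$ even, whose $Y$-neighbourhood is all odd columns $\ge i-1$ (every vertex): it is represented faithfully by $z_{i,1}$ because the only vertex of column $i-1$ that $z_{i,1}$ fails to see, namely $z_{i-1,1}$, is not the image of any vertex of column $i-1$ (the bottom of that column having been rerouted). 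Since all non-bottom-row images lie in rows $\ge 2$ and the column indices used stay within $\{1,\dots,2n\}$, everything fits inside $Z_{2n,2n}$.

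The step I expect to be the real work is this rerouting of the bottom row in the direction $Z_{2n,2n}\supseteq Y_{n,n}$: one must choose the target row-$1$ positions so that every type-(4) edge appears while no spurious edge to the adjacent columns — or to the wrong end of them — is introduced, and then confirm that the rerouted images interact correctly both with one another and with the diagonally shifted bulk. Everything else — the two diagonal-shift calculations and all the non-edge checks — is bookkeeping, and the bounds $2n$ leave ample room for every image.
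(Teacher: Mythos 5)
Your proposal is correct and is essentially the paper's own argument made explicit: the paper offers no written proof, simply asserting that the embeddings pictured in Figure~\ref{fig:Z-Y-embeds} generalise, and your maps $z_{i,j}\mapsto y_{i,i+j}$ and $y_{i,j}\mapsto z_{i,n-i+j}$ (with the bottom row rerouted to $z_{i,1}$ for even $i$ and $z_{i+2,1}$ for odd $i$) are exactly those pictured embeddings written as formulas. The sign conversions and the two observations about the rerouted bottom row all check out, so the remaining pairwise verifications are indeed routine.
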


As a consequence, we note that $Y_{2n,2n}$ is an $n$-universal graph for the class of bichain graphs.


\section{Pivoting, universality and clique width}
\label{sec:main-j}
The proof of minimality of the class of bichain graphs which we present here takes an entirely different approach to that for bipartite permutation graphs in \cite{lozin:minimal-classes:}. Here, we {\it reduce} the problem to bipartite permutation graphs by means of the so-called pivoting operation.
We will show that an $X$-grid can be `pivoted' into a $Y$-grid, and when this is coupled with our Lemma~\ref{lem:X-embeddings} to control how $X$-grids can embed in larger $X$-grids, we can conclude that any proper subclass of bichain graphs must have bounded clique-width.

Let $G=(V,E)$ be a graph, and $v\in V$ a vertex. Following, Oum~\cite{oum:rank-width-and:}, define the \emph{local complementation} of $G$ at $v$ to be the graph $G*v$ with vertex set $V$ and edge set $E\triangle\{xy\ :\ xv, yv\in E\text{ and }x\neq y\}$ where $\triangle$ denotes the symmetric difference. In other words, $G*v$ is formed from $G$ by replacing the subgraph induced on the neighbourhood of $v$ with its complement. Two graphs $H$ and $G$ are \emph{locally equivalent} if $H$ can be obtained from $G$ by a sequence of local complementations.

\begin{prop}[Oum~{\cite[Corollary~2.7]{oum:rank-width-and:}}]\label{prop:rw-pivot}If $H$ is locally equivalent to $G$, then $\rwd(H)=\rwd(G)$.\end{prop}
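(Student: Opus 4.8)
The plan is to prove this directly from the definition of rank-width in terms of layouts, by showing that the relevant rank data is unchanged by a single local complementation and then inducting along the sequence of local complementations relating $H$ and $G$. Recall that a \emph{layout} of a graph $G$ is a pair $(T,\mu)$ consisting of a subcubic tree $T$ whose leaves are in bijection (via $\mu$) with $V(G)$; deleting an edge $e$ of $T$ splits the leaves, and hence $V(G)$, into a bipartition $(A_e,B_e)$, and the \emph{width} of $e$ is the rank over $\mathbb{F}_2$ of the $A_e\times B_e$ submatrix of the adjacency matrix of $G$. The rank-width $\rwd(G)$ is the minimum over all layouts of the maximum edge-width. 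Writing $\rho_G(A)$ for the $\mathbb{F}_2$-rank of the $A\times(V(G)\setminus A)$ submatrix of the adjacency matrix of $G$, the only way in which $G$ enters the definition of $\rwd(G)$ is through the function $\rho_G$ --- the collection of admissible layouts depends on $G$ only through its vertex set. Hence it suffices to prove that $\rho_{G*v}=\rho_G$ for every vertex $v$.

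So fix $v$ and a bipartition $(A,B)$ of $V(G)$, and assume without loss of generality that $v\in A$ (the rank of a matrix equals that of its transpose, so the choice of side is immaterial). Let $M$ and $M'$ be the $A\times B$ submatrices of the adjacency matrices of $G$ and $G*v$, respectively. By the definition of local complementation, the $(x,y)$-entry with $x\in A$, $y\in B$ is toggled precisely when $x\in N(v)$ and $y\in N(v)$; thus, over $\mathbb{F}_2$,
\[M'=M+u\,w^{\mathsf{T}},\]
where $u\in\mathbb{F}_2^{A}$ and $w\in\mathbb{F}_2^{B}$ are the indicator vectors of $N(v)\cap A$ and $N(v)\cap B$. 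The point is that $G$ has no loops, so $v\notin N(v)$: the $v$-coordinate of $u$ is $0$, the row of $M$ indexed by $v$ is untouched, and that row is exactly $w^{\mathsf{T}}$ (it records the neighbours of $v$ in $B$). Consequently every row of $M'$ is a row of $M$ plus a scalar multiple of the row $w^{\mathsf{T}}$ of $M$, so the row space of $M'$ is contained in that of $M$ and $\operatorname{rank}M'\le\operatorname{rank}M$. Since $(G*v)*v=G$, the neighbourhood of $v$ is unchanged and the $v$-row of $M'$ is again $w^{\mathsf{T}}$, so the symmetric argument gives $\operatorname{rank}M\le\operatorname{rank}M'$. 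Hence $\rho_{G*v}(A)=\rho_G(A)$.

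Finishing up is immediate: since $\rho_{G*v}=\rho_G$, every layout has the same width for $G$ as for $G*v$, whence $\rwd(G*v)=\rwd(G)$; and if $H=G*v_1*v_2*\cdots*v_k$, then applying this $k$ times yields $\rwd(H)=\rwd(G)$. (As a by-product, since the pivot operation satisfies $G\wedge uv=G*u*v*u$ when $uv\in E(G)$, it too preserves rank-width, which is the form actually needed to pivot $X$-grids into $Y$-grids in the next section.)

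The only delicate point is the bookkeeping in the middle step --- specifically, observing that the absence of loops forces the distinguished vertex $v$ to contribute a zero coordinate to $u$, so that the rank-one update $u\,w^{\mathsf{T}}$ already has its row space inside that of $M$; once this is isolated, both inequalities on the ranks are one-liners and the remainder of the argument is purely formal.
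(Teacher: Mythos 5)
Your proof is correct. Note that the paper does not prove this statement at all: it is quoted as an external result (Oum's Corollary~2.7), so there is no internal argument to compare against. Your self-contained proof is essentially the standard one from the literature: the cut-rank function $\rho_G$ is invariant under local complementation, because for a split $(A,B)$ with $v\in A$ the $A\times B$ submatrix changes by the rank-one update $u\,w^{\mathsf T}$ over $\mathbb{F}_2$, where $u$ is the indicator of $N(v)\cap A$ and $w^{\mathsf T}$ is precisely the $v$-row of $M$ (here the absence of loops, giving $u_v=0$, is exactly the right point to isolate), so the row space cannot grow; applying the same observation to $G*v$, using that local complementation is an involution and does not alter $N(v)$, gives the reverse containment. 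Since layouts depend only on the vertex set and the width of every tree-edge is computed from $\rho_G$, equality of cut-rank functions gives equality of rank-widths, and induction along the sequence of local complementations finishes the job. This is stronger than what the paper strictly needs (it only uses the equality of rank-width, via pivots, in Theorem~4.3), and your remark that pivoting $G*u*v*u$ is therefore also rank-width-preserving is exactly the form invoked there. The one small presentational caveat: you should make explicit that all ranks are over $\mathbb{F}_2$ (you do) and that $\rho_G(A)=\rho_G(B)$ by symmetry of the adjacency matrix, which justifies the "without loss of generality $v\in A$" step; with that, the argument is complete.
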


Crucially, observe that the above proposition does not simply bound the rank-width of one graph in terms of the rank-width of the other, but actually states that they are equal. Thus, we may apply arbitrarily many such operations to a graph, and still obtain one whose rank-width is the same. Note that Gurski~\cite{gurski:the-behavior:} has recently studied the effect of various operations (including local complementation) directly on clique-width (rather than via rank-width), but the effect of a single local complementation can \emph{triple} the clique-width. 

One particularly important sequence of local complementations is the pivot:  for an edge $uv$ of $G$, the \emph{pivot} on $uv$ is the graph $G*u*v*u$. That this is well-defined (i.e.\ $u$ and $v$ can be interchanged in the definition) is established in~\cite{oum:rank-width-and:}, and the effect of this process is to complement the edges between the three sets of vertices $N(u)\cap N(v)$, $N(u)\setminus N(v)$ and $N(v)\setminus N(u)$.\footnote{Technically, pivoting on an edge of a bipartite graph also causes the labels of the two end-vertices to be interchanged, but this is of no consequence here.}

We remark further that if $G$ is a bipartite graph, then the pivot on an edge $uv$ (formed by taking the complement of the edges between $N(u)\setminus\{v\}$ and $N(v)\setminus\{u\}$) gives rise to another bipartite graph (see~\cite{oum:rank-width-and:}). 

\begin{lemma}\label{lem:pivotXY}The graph $Y_{2n,2n}$ can be obtained from $X_{2n,2n}$ by a sequence of $n$ pivots, acting from right to left on alternate edges in the bottom row of $X$.\end{lemma}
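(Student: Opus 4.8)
The plan is to track explicitly what happens to the $X$-grid as we perform the $n$ pivots described, working from the rightmost available bottom-row edge leftwards, and to verify directly that the adjacency relation after all the pivots is precisely that of $Y_{2n,2n}$. First I would set up coordinates: label the columns of $X_{2n,2n}$ by $1,\dots,2n$ and the rows by $1,\dots,2n$, so that between consecutive columns $i$ and $i+1$ we have a chain graph, and I would record which bottom-row edges are being pivoted. Since we act on alternate edges from right to left, the pivoted edges are (in order of application) the edge between column $2n-1$ and column $2n$ in row $1$, then between columns $2n-3$ and $2n-2$ in row $1$, and so on, giving exactly $n$ pivots, one for each odd column $2i-1$ paired with $2i$ at the bottom.

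The key step is to understand a single pivot on a bottom-row edge $uv$, where $u$ lies in an odd column $2i-1$ and $v$ in the even column $2i$. Using the description from the excerpt, the pivot complements all edges between $N(u)\setminus\{v\}$ and $N(v)\setminus\{u\}$ (and swaps the labels of $u$ and $v$, which is harmless). In the $X$-grid the bottom vertex of a column is adjacent to every vertex of the neighbouring columns with the appropriate chain orientation, so $N(u)$ and $N(v)$ are essentially whole columns; hence a single pivot reverses the chain orientation between columns $2i-1$ and $2i$ and — crucially — introduces the diagonal edges from column $2i$ to the odd columns further right, because those vertices were neighbours of $v$ through the already-modified structure. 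I expect this to be the main obstacle: one must check carefully that the pivots do not interfere destructively with one another, i.e.\ that pivoting on the bottom edge in columns $2i-1,2i$ does not disturb the diagonal edges or chain orientations created by the earlier (rightward) pivots, and that the cumulative effect on the bottom row is exactly the type-(4) edges of the $Y$-grid. Processing the edges from right to left is what makes this work: when we pivot at columns $2i-1,2i$, all columns to the right have already been put into their final ($Y$-grid) configuration, and the neighbourhoods $N(u),N(v)$ relevant to this pivot only involve columns $2i-2,2i-1,2i,2i+1$ together with the diagonal structure to the right, which a short case analysis shows is transformed correctly.

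Concretely, I would argue by induction on the number of pivots performed: after $k$ pivots, the graph agrees with $Y_{2n,2n}$ on all columns $2n-2k+1,\dots,2n$ and agrees with $X_{2n,2n}$ on columns $1,\dots,2n-2k$, with the edges between column $2n-2k$ and $2n-2k+1$ being the original chain edges of $X$. The inductive step is to perform the $(k+1)$st pivot (on the bottom edge between columns $2n-2k-1$ and $2n-2k$) and verify, by comparing adjacencies type by type against the defining conditions (1)--(4) of $Y_{n,k}$, that the claimed agreement extends to columns $2n-2k-1,\dots,2n$. The base case $k=0$ is immediate, and after $k=n$ pivots every column is in $Y$-configuration, giving $Y_{2n,2n}$; combined with Proposition~\ref{prop:rw-pivot} (so that rank-width, and hence, via Proposition~\ref{prop:cw-rw}, clique-width up to the stated bounds, is preserved), this is exactly the statement. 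The verification in the inductive step is the only genuinely computational part, and while slightly fiddly it is a finite check on a bounded neighbourhood of the pivoted edge.
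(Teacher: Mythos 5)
Your overall strategy---processing the pivots from right to left, inducting on the number of pivots performed, and comparing the outcome with the defining conditions of the $Y$-grid---is essentially the paper's approach. However, your description of what a single pivot does is wrong in a way that matters, and the error propagates into your inductive invariant, which is already false after the first pivot. Write $C_1,\dots,C_{2n}$ for the columns and $x_1,\dots,x_{2n}$ for the bottom row. When you pivot on the bottom edge $x_{2i-1}x_{2i}$, the relevant neighbourhoods are $N(x_{2i-1})=C_{2i-2}\cup\{x_{2i}\}$ and (thanks to the pivots already performed to the right) $N(x_{2i})=C_{2i-1}\cup C_{2i+1}\cup\cdots\cup C_{2n-1}$; the pivot therefore complements the edges between $C_{2i-2}$ and $C_{2i-1}\setminus\{x_{2i-1}\}$ and makes $C_{2i-2}$ complete to the odd columns $C_{2i+1},\dots,C_{2n-1}$. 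In other words, the action takes place on the columns to the \emph{left} of the pivoted edge: it reverses the chain between columns $2i-2$ and $2i-1$ (sparing the bottom vertex $x_{2i-1}$, which is exactly what produces the type-(4) edges of $Y$) and creates the diagonal edges emanating from column $2i-2$. Your proposal instead asserts that this pivot reverses the chain between columns $2i-1$ and $2i$ and introduces the diagonals from column $2i$; but the pair $(2i-1,2i)$ carries identical edges in $X$ and $Y$ and is untouched by this pivot, while the diagonals from column $2i$ were created by the \emph{previous} pivot, the one on $x_{2i+1}x_{2i+2}$.

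Consequently your stated invariant, that after $k$ pivots ``the edges between column $2n-2k$ and $2n-2k+1$ are the original chain edges of $X$,'' fails even for $k=1$: those are precisely the edges that the $k$-th pivot complements, and in addition column $2n-2k$ already carries diagonal edges crossing into the processed region, which your invariant omits entirely. With the local effect misidentified, the ``finite check'' in your inductive step would not go through. The repair is to formulate the induction as the paper does: record, just before the pivot on $x_{2i-1}x_{2i}$, the two neighbourhoods displayed above together with the observation that the only edges currently present between them form the chain between $C_{2i-2}$ and $C_{2i-1}$; the complementation then yields exactly the type-(2), type-(3) and type-(4) edges of $Y$ involving column $2i-2$, and after all $n$ pivots the graph is $Y_{2n,2n}$ by direct inspection. (Also, the appeal to preservation of rank-width at the end of your argument is not needed for this lemma, which is a purely combinatorial statement about the two grids.)
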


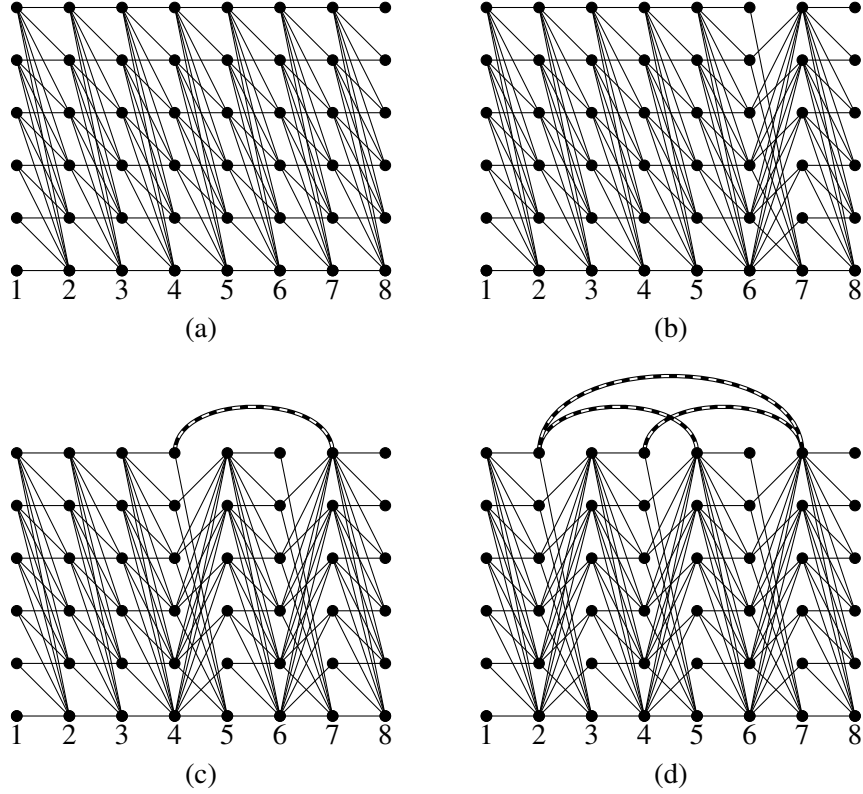
\begin{figure}[ht]
\centering
\begin{tabular}{ccc}
\begin{tikzpicture}[scale=0.7]
\mnnodearray{8}{6}
\foreach \col in {1,...,7}
	\downhoriz{\col}{6};
\labelrow{1}{8}
\end{tikzpicture}
&\rule{5pt}{0pt}&
\begin{tikzpicture}[scale=0.7]
\mnnodearray{8}{6}
\foreach \col in {1,2,3,4,5,7}
	\downhoriz{\col}{6};
\upnohoriz{6}{6}
\foreach \i/\j in {6/7}
	\foreach \k in {1,...,6}
		\draw (\i,\k) -- (\j,1);
\labelrow{1}{8}
\end{tikzpicture}
\\
(a)&&(b)\\
\begin{tikzpicture}[scale=0.7]
\curvetop{4}{7}{6}
\mnnodearray{8}{6}
\foreach \col in {1,2,3,5,7}
	\downhoriz{\col}{6};
\foreach \col in {4,6}
	\upnohoriz{\col}{6};
\foreach \i/\j in {6/7,4/5}
	\foreach \k in {1,...,6}
		\draw (\i,\k) -- (\j,1);
\labelrow{1}{8}
\end{tikzpicture}
&&
\begin{tikzpicture}[scale=0.7]
\foreach \colA/\colB in {4/7,2/5,2/7}
	\curvetop{\colA}{\colB}{6};
\mnnodearray{8}{6}
\foreach \col in {1,3,5,7}
	\downhoriz{\col}{6};
\foreach \col in {2,4,6}
	\upnohoriz{\col}{6};
\foreach \i/\j in {6/7,4/5,2/3}
	\foreach \k in {1,...,6}
		\draw (\i,\k) -- (\j,1);
\labelrow{1}{8}
\end{tikzpicture}
\\
(c)&&(d)
\end{tabular}
\caption{The pivoting sequence from $X_{8,6}$ (diagram (a)) to $Y_{8,6}$ (diagram (d)). From (a) to (b), the pivot is on edge 7--8. From (b) to (c), the pivot is on edge 5--6, and from (c) to (d), pivot on edge 3--4.}
\label{fig:pivot-from-X}
\end{figure}

\begin{proof}
This proof is illustrated in Figure~\ref{fig:pivot-from-X}. For ease of reference, let the sets of vertices contained in the columns of $X_{2n,2n}$ be $C_1,\dots,C_{2n}$, working from left to right, and label the vertices in the bottom row $x_1,\dots,x_{2n}$. We will pivot on the edges $x_{2n-1}x_{2n},x_{2n-3}x_{2n-2},\dots,x_1x_2$ in that order. 

First, note that $N(x_{2n}) = C_{2n-1}$, and $N(x_{2n-1}) = C_{2n-2} \cup \{x_{2n}\}$. Thus the effect of pivoting on $x_{2n-1}x_{2n}$ is to complement the edges between $C_{2n-2}$ and $C_{2n-1}\setminus \{x_{2n-1}\}$.

We now claim, by induction, that after the pivot on the edge $x_{2i-1}x_{2i}$, we take the complement of the edges between $C_{2i-2}$ and $C_{2i-1}\setminus\{x_{2i-1}\}$, and every vertex in column $C_{2i-2}$ becomes adjacent to every vertex in column $C_{2j-1}$ for all $j=i+1,\dots,n$. 

The base case is the edge $x_{2n-1}x_{2n}$ mentioned above, so now consider the pivot on $x_{2i-1}x_{2i}$ for some $i<n$. Since $x_{2i}\in C_{2i}$, by induction before pivoting we have $N(x_{2i}) = C_{2i-1}\cup C_{2i+1} \cup \cdots \cup C_{2n-1}$, and $N(x_{2i-1}) = C_{2i-2}\cup\{x_{2i}\}$. Before pivoting on $x_{2i-1}x_{2i}$, the only edges between these two neighbourhoods is the chain graph between $C_{2i-2}$ and $C_{2i-1}$, so after pivoting we obtain the edges required by the inductive hypothesis. 

Now, by inspection, the graph obtained after pivoting all $n$ edges $x_{2i-1}x_{2i}$ on the bottom row of $X_{2n,2n}$ is precisely $Y_{2n,2n}$.
\end{proof}

Note that the action of pivoting is an involution, thus the above proof also shows that we may pivot from $Y_{2n,2n}$ to $X_{2n,2n}$ by using alternating edges in the bottom row of the $Y$-grid, but working from left to right.

We are now ready to prove the first part of Theorem~\ref{thm:main}.

\begin{theorem}\label{thm:minimal}
The class of bichain graphs is a minimal hereditary class of unbounded clique-width.
\end{theorem}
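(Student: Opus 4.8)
The statement has two halves. The first --- that the bichain graphs have unbounded clique-width --- is, in effect, already available: it follows from the analogous fact for split permutation graphs~\cite{korpelainen:split:} via Proposition~\ref{prop:sp-bi} and the observation~\cite{kaminski:recent-developm:} that replacing a clique by an independent set perturbs clique-width only boundedly. So I would put all the effort into minimality. Let $\C'$ be a proper hereditary subclass of the bichain graphs, and fix a bichain graph $H\notin\C'$ on, say, $n$ vertices. Since $\C'$ is hereditary, every member of $\C'$ is $H$-free; and since $Y_{2n,2n}$ is $n$-universal for the bichain graphs (the consequence of Lemma~\ref{lem:ZY} recorded above) it contains $H$, so every member of $\C'$ is $Y_{2n,2n}$-free. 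It therefore suffices to prove the bichain analogue of Theorem~\ref{thm:bp-clique-width}: \emph{for every $m$, the $Y_{m,m}$-free bichain graphs have clique-width bounded by a function of $m$ alone.}

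To prove this I would reduce to Theorem~\ref{thm:bp-clique-width} itself by pivoting. Let $G$ be a $Y_{m,m}$-free bichain graph on $N$ vertices. By Theorem~\ref{thm:universal} and the `moreover' clause of Lemma~\ref{lem:ZY}, take $G$ to be an induced subgraph of $Y_{2N,2N}$ avoiding the bottom row, write $R$ for the bottom-row vertices, and set $S=V(G)\cup R$. By Lemma~\ref{lem:pivotXY}, read as an involution, $X_{2N,2N}$ is obtained from $Y_{2N,2N}$ by a sequence of pivots whose edges all lie inside $R\subseteq S$; since local complementation at a vertex of $S$ commutes with taking the induced subgraph on $S$, it follows that $X_{2N,2N}[S]$ is locally equivalent to $Y_{2N,2N}[S]$. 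Hence, by Proposition~\ref{prop:rw-pivot}, clique-width monotonicity, and Proposition~\ref{prop:cw-rw},
\[
\cwd(G)\ \le\ \cwd\big(Y_{2N,2N}[S]\big)\ \le\ 2^{\,\rwd(X_{2N,2N}[S])+1}-1\ \le\ 2^{\,\cwd(X_{2N,2N}[S])+1}-1 ,
\]
using $\rwd(Y_{2N,2N}[S])=\rwd(X_{2N,2N}[S])$ in the middle step. Now $X_{2N,2N}[S]$ is an induced subgraph of an $X$-grid, hence a bipartite permutation graph, so by Theorem~\ref{thm:bp-clique-width} it is enough to show that $X_{2N,2N}[S]$ is $X_{m',m'}$-free for an $m'$ depending only on $m$.

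So suppose $X_{2N,2N}[S]$ contains a copy of $X_{m',m'}$ with $m'$ large. As $X_{m',m'}$ contains $X_{m'',4m''-1}$ for an $m''$ of order $m'$, Lemma~\ref{lem:X-embeddings} yields a copy of $X_{m'',m''}$ on a vertex set $W\subseteq S$ that occupies $m''$ contiguous, column-aligned columns of $X_{2N,2N}$; moreover, inspecting the construction in Lemma~\ref{lem:X-embeddings} (which uses only the upper vertices of each column) we may take $W\subseteq V(G)$, disjoint from $R$. Pivoting $X_{2N,2N}$ back to $Y_{2N,2N}$ --- the same local equivalence restricted to $S$ --- acts on this column-aligned block exactly as in the proof of Lemma~\ref{lem:pivotXY}, turning it into a $Y$-grid of side boundedly less than $m''$ sitting inside $Y_{2N,2N}[W]=G[W]\subseteq G$. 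Choosing $m'$ large enough in terms of $m$ makes this side at least $m$, so $G$ would contain $Y_{m,m}$ --- a contradiction; this gives the desired $X_{m',m'}$-freeness and finishes the proof. The step I expect to demand the most care is exactly this last one: following the bottom-row pivots through the column-aligned block and keeping track of the several additive losses --- the slack built into Lemma~\ref{lem:X-embeddings}, the two boundary columns of the block, and (if one cannot arrange $W$ to miss $R$) the single $R$-vertex per column --- so that a single function $m'=m'(m)$ absorbs all of them. By contrast, the passage to bipartite permutation graphs via local equivalence on $S$, and the invocation of Theorem~\ref{thm:bp-clique-width}, should be routine.
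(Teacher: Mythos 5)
Your overall route is the paper's: embed $G$ into a universal $Y$-grid together with its bottom row $R$, use Lemma~\ref{lem:pivotXY} and Proposition~\ref{prop:rw-pivot} to pass to a bipartite permutation graph of the same rank-width, invoke Theorem~\ref{thm:bp-clique-width}, and rule out a large $X$-grid by taking a column-aligned copy (Lemma~\ref{lem:X-embeddings}) and pivoting it back into the $Y$-grid; your $Y_{2N,2N}[S]$ and $X_{2N,2N}[S]$ are exactly the paper's $G^+$ and $G^*$. The divergence is the endgame, and that is where there is a genuine gap. First, your claim that the copy produced by Lemma~\ref{lem:X-embeddings} ``uses only the upper vertices of each column'' and so can be taken disjoint from $R$ is not justified: the lemma's proof normalises the orientation by the $180^{\circ}$ rotation of the host grid, and in the reversed orientation the constructed copy avoids the \emph{top} row, not the bottom one, so it may contain one $R$-vertex per column. (This is repairable: those $R$-vertices are lowest in their host columns, hence lie in a single extreme row of the abstract copy, which you can delete at the cost of one row; but some such argument must be supplied.)

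Second, and more importantly, even once $W$ is disjoint from $R$, the pivoted block $Y_{2N,2N}[W]=G[W]$ is \emph{not} ``a $Y$-grid of side boundedly less than $m''$'': precisely because $W$ misses the bottom row of $Y_{2N,2N}$, none of the type-(4) edges (which every $Y_{s,s}$ with $s\ge 2$ has) can occur, so the block is only the grid pattern built from edge types (1)--(3). Hence the intended contradiction ``so $G$ would contain $Y_{m,m}$'' does not yet follow; you still have to embed $Y_{m,m}$ into this bottom-row-free pattern. This is exactly where the paper switches target: it forbids the $Z$-grid $Z_{k,k}$ (via Theorem~\ref{thm:universal}) rather than a $Y$-grid, keeps any $R$-vertices in the pivoted block $H$, observes that $H$ is isomorphic to $Y_{2k,2k}$ or to an induced subgraph of $Y_{2k+1,2k+1}$ with the $R$-vertices confined to the pivot row, and then uses the ``moreover'' clause of Lemma~\ref{lem:ZY} to find $Z_{k,k}$ in $H$ avoiding that row, hence inside $G$. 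Your version can be closed the same way: either forbid $Z_{k,k}$ from the start, or chain the two parts of Lemma~\ref{lem:ZY} ($Y_{m,m}$ embeds in $Z_{2m,2m}$, which embeds in $Y_{4m,4m}$ avoiding its bottom row, an embedding that uses no type-(4) edges and therefore lands inside your block once $m''\gtrsim 4m$ plus the parity and boundary slack you already budget for). With one of these insertions the argument is complete; as written, the final contradiction is not established.
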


\begin{proof}
First, note that, by Proposition~\ref{prop:cw-rw}, a class of graphs has unbounded clique-width if and only if it has unbounded rank-width. Thus it suffices to prove that the bichain graphs are minimal of unbounded rank-width.

Next, by the comment made at the end of Section~\ref{sec:pre}, the class of bichain graphs has unbounded clique- (and rank-)width, since the class of split permutation graphs does. 

Now consider any proper subclass $\C$ of bichain graphs, formed by forbidding at least one bichain graph $H$, which we may assume has $k$ vertices. According to Theorem~\ref{thm:universal},
any bichain graph with $k$ vertices can be embedded into the $Z$-grid $Z_{k,k}$. In particular, since $H$ is a forbidden graph of $\C$ with $k$ vertices, the subclass $\C$ does not contain the universal graph $Z_{k,k}$.

For each $G\in\C$ on $n$-vertices, fix some arbitrary embedding $\phi_G$ of $G$ into the $n$-universal bichain graph $Y_{2n,2n}$. With respect to $\phi_G$, now form the graph $G^+$ which comprises the induced copy of $G$ together with the set $R$ of at most $2n$ further vertices so that $G^+$ contains every vertex from the bottom row of $Y_{2n,2n}$. We will call these $2n$ vertices the \emph{pivot row} of $G^+$. Note that $G^+$ is still a bichain graph. Finally, form $G^*$ by pivoting on every other edge of the pivot row of $G^+$, as per Lemma~\ref{lem:pivotXY}, and note that $G^*$ is a bipartite permutation graph. By Proposition~\ref{prop:rw-pivot}, we have $\rwd(G^+)=\rwd(G^*)$. 

Denote by $\C^*$ the set of graphs $G^*$ that we obtain in this way, i.e.\ $\C^* = \{ G^* : G\in \C\}$. Note that $\C^*$ is a subset of the bipartite permutation graphs, but it need not be a hereditary class because the embeddings of $G$ into $Y_{2n,2n}$ were fixed arbitrarily. 

We claim that no graph in $\C^*$ contains the universal bipartite permutation graph $X_{8k-1,2k}$ as an induced subgraph. This will complete the proof, as then we have shown that every graph $G^*$ belongs to a proper subclass of the bipartite permutations graphs, and therefore there is an absolute bound on the rank-width of any such $G^*$. Since $\rwd(G^*)=\rwd(G^+)$ and $G$ is an induced subgraph of $G^+$, we conclude that $G$ must also have bounded rank-width, as required.

For a contradiction, suppose that there exists  $G^*\in\C^*$ that contains $X_{8k-1,2k}$ as an induced subgraph. Now the graph $G^*$ is a bipartite permutation graph, and it comes with an implicit embedding into the $X$-grid $X_{2n,2n}$, inherited from the fixed embedding $\phi_G$ of $G$ into $Y_{2n,2n}$, together with the set $R$ of vertices from the bottom row of $Y_{2n,2n}$. From this embedding of $G^*$ in $X_{2n,2n}$, we restrict to the vertices of $G^*$ that form an induced $X_{8k-1,2k}$, to obtain an embedding of $X_{8k-1,2k}$ in $X_{2n,2n}$. By Lemma~\ref{lem:X-embeddings} this embedding contains a copy of $X_{2k,2k}$ that lies in $2k$ consecutive columns, with $2k$ vertices in each column. 

We now pivot $X_{2n,2n}$ to $Y_{2n,2n}$ via the process in Lemma~\ref{lem:pivotXY}, and observe that the embedded copy of $X_{2k,2k}$ gets pivoted to a graph $H$ that has $2k$ vertices in each of $2k$ consecutive columns of $Y_{2n,2n}$, and which is an induced subgraph of $G^+$. Note that by construction, this graph $H$ is either isomorphic to $Y_{2k,2k}$, or it is isomorphic to an induced subgraph of $Y_{2k+1,2k+1}$. (This latter case arises if not all vertices of the pivot row of $Y_{2k,2k}$ are present in $H$, and/or if the leftmost column of $H$ embeds into an even-numbered column of $Y_{2n,2n}$.) 

Recalling that $R$ denoted the extra vertices that were added to the embedded copy of $G$ to form $G^+$, we can now find an induced subgraph $H^-$ of $G$ by removing from $H$ all vertices that were embedded into vertices from $R$. Using the second part of Lemma~\ref{lem:ZY}, inside $H$ we can find a copy of $Z_{k,k}$ that does not use any vertices of the pivot row of $Y_{2k,2k}$, and therefore we can also find $Z_{k,k}$ inside $H^-$. However, this implies that $Z_{k,k}$ is an induced subgraph of $G$, which implies $G\not\in\C$, a contradiction.
\end{proof}


\section{Antichains and well-quasi-ordering}
\label{sec:canonical}


In this section we will show that the class of bichain graphs contains a canonical antichain with respect to the labelled induced subgraph relation, and then we will remark on how this argument can be modified to yield a canonical antichain for split permutation graphs. 

We begin by defining the antichain. First, we define a sequence of graphs $S_k$ as follows. $S_k$ has vertex set $V(S_k)=\{x_i, y_i: 1\leq i \leq k\}$, and edge set defined by $x_iy_j \in E(S_k)$ if and only if $j=i$ or $j \geq i+2$. The graph $S_5$ is pictured on the left in Figure~\ref{fig:antichain}.

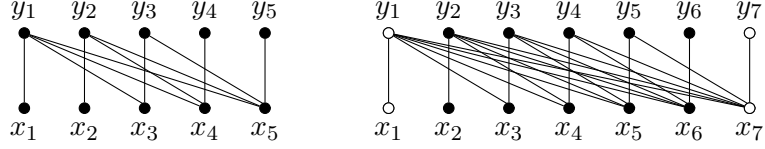
\begin{figure}[h]
\centering
\begin{tabular}{ccc}
\begin{tikzpicture}[xscale=0.8]
\foreach \i in {1,2,...,5}
{
\node (x\i) at (\i,1) [label=above:$y_{\i}$]{};
\node (y\i) at (\i,0) [label=below:$x_{\i}$]{};
\draw (x\i) -- (y\i);
}
\foreach \i in {1,2,3}
{
\pgfmathsetmacro\jstart{\i + 2};
\foreach \j in {\jstart,...,5}
\draw (x\i) -- (y\j);
}
\end{tikzpicture}
&\rule{10pt}{0pt}&
\begin{tikzpicture}[xscale=0.8]
\foreach \i in {1,2,...,7}
{
\node (x\i) at (\i,1) [label=above:$y_{\i}$]{};
\node (y\i) at (\i,0) [label=below:$x_{\i}$]{};
\draw (x\i) -- (y\i);
}
\foreach \i in {1,2,...,5}
{
\pgfmathsetmacro\jstart{\i + 2};
\foreach \j in {\jstart,...,7}
\draw (x\i) -- (y\j);
}
\node [fill=white] at (1,0) {};
\node [fill=white] at (1,1) {};
\node [fill=white] at (7,0) {};
\node [fill=white] at (7,1) {};
\end{tikzpicture}
\end{tabular}
\caption{On the left, the graph $S_5$. On the right, the 2-coloured graph $S_7^\circ$.}
\label{fig:antichain}
\end{figure}
 
First observe that the graphs $S_k$ are all bichain graphs, since the induced subgraph on the odd vertices is a chain graph, as is the induced subgraph on the even vertices. Next, notice that $S_k$ embeds into $S_{k'}$ whenever $k\leq k'$, but it must map into a consecutive set of pairs of vertices $x_i$, $y_i$. This is because the only induced copies of $2K_2$ in the graph are formed by the vertices in two consecutive columns.

Consequently, we can form an infinite antichain with two labels from $\{S_k\}$ as follows: from each graph $S_k$, form a 2-coloured graph $S_k^\circ$ by colouring the vertices $x_1$, $y_1$, $x_k$ and $y_k$ of $S_k$ white, and all remaining vertices black. The graph $S_7^\circ$ is illustrated in Figure~\ref{fig:antichain}. Without loss of generality we can assume that black vertices cannot be embedded in white vertices (otherwise we can swap the roles of black and white), so by the observation about embedding copies of $2K_2$, it follows that $\{S_k^\circ\}$ is an infinite antichain in the labelled induced subgraph ordering.

It remains to prove that $\{S_k^\circ\}$ is a \emph{canonical} labelled antichain. In other words, we need to prove that every subclass of bichain graphs containing only finitely many graphs $S_k$ is well-quasi-ordered by the labelled induced subgraph relation. 

\subsection{Structure and well-quasi-ordering}

Before we proceed to show that $\{S_k^\circ\}$ is a canonical labelled antichain in the class of bichain graphs, we require a number of concepts relating to structure and well-quasi-ordering from the literature, which we will briefly review here.

The tool we will use to prove well-quasi-orderability is the notion of \emph{letter graphs}. For $k\geq 1$, fix an alphabet $X$ of size $k$ (for example, $X=\{1,2,\dots,k\}$). A $k$-\emph{letter graph} $G$ is a graph defined by a finite word $x_1x_2\cdots x_n$, with $x_i\in X$ for all $i$, together with a subset $S\subseteq X\times X$ such that:
\begin{itemize}
\item $V(G) = \{1,2,\ldots,n\}$
\item $E(G) = \{ij : i\leq j$ and $(x_i, x_j) \in S\}$
\end{itemize}

The importance of this notion is due to the following theorem:

\begin{theorem}[Petkov\v{s}ek~\cite{petkovsek:letter-graphs-a:}]\label{thm:letter11}
For any fixed $k$, the class of $k$-letter graphs is  well-quasi-ordered by the labelled induced subgraph relation. 
\end{theorem}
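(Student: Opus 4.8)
The plan is to reduce the statement to Higman's lemma on words. A $k$-letter graph comes with a defining word $x_1x_2\cdots x_n$ over a fixed finite alphabet $X$ with $|X|=k$, together with a relation $S\subseteq X\times X$; there are only finitely many possible choices of $S$ (at most $2^{k^2}$). Given an infinite sequence of labelled $k$-letter graphs, I would first fix, for each term, one representing pair (word, $S$); by the pigeonhole principle infinitely many terms share the same relation $S$, and it suffices to find a ``good pair'' inside this subsequence. So the whole problem reduces to showing: for fixed $S$, the labelled $k$-letter graphs admitting a representing word over $X$ with relation $S$ are labelled well-quasi-ordered.

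The second step is an elementary translation between words and graphs. Let $(G,\ell)$ be a labelled $k$-letter graph with word $x_1\cdots x_m$ and relation $S$, and $(H,\ell')$ one with word $y_1\cdots y_n$ and the \emph{same} relation $S$. Suppose there is a strictly increasing injection $\sigma:\{1,\dots,m\}\to\{1,\dots,n\}$ with $x_i=y_{\sigma(i)}$ and $\ell(i)\le\ell'(\sigma(i))$ in the label poset $(W,\le)$ for all $i$. Then $\sigma$ is an embedding of $(G,\ell)$ into $(H,\ell')$ as a labelled induced subgraph: since $\sigma$ preserves order and matches letters, $ij\in E(G)$ (with $i\le j$) iff $(x_i,x_j)\in S$ iff $(y_{\sigma(i)},y_{\sigma(j)})\in S$ iff $\sigma(i)\sigma(j)\in E(H)$, and the label inequalities hold by hypothesis. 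Only this direction is needed, so the non-uniqueness of representing words is harmless.

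The third step assembles these. Encode each labelled $k$-letter graph $(G,\ell)$ with relation $S$ as the word $(x_1,\ell(1))(x_2,\ell(2))\cdots(x_m,\ell(m))$ over the alphabet $X\times W$ ordered by $(a,u)\le(a',u')$ iff $a=a'$ in $X$ and $u\le u'$ in $W$. Because $X$ is finite, this ordered alphabet is a finite disjoint union of copies of $(W,\le)$, hence well-quasi-ordered. By Higman's lemma the set of finite words over $(X\times W,\le)$ is well-quasi-ordered under the subsequence-domination order, and unwinding the encoding shows this order is exactly the hypothesis of the translation in the previous paragraph. Hence an infinite sequence of labelled $k$-letter graphs with relation $S$ encodes to an infinite sequence of words, which contains a pair $u_i\preceq u_j$ with $i<j$, yielding the desired pair of labelled graphs and completing the proof.

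The only place needing care is the translation step: one must observe that $E(G)=\{ij:i\le j,\ (x_i,x_j)\in S\}$ depends solely on the letters at the two positions and their relative order, so that an order-preserving, letter-respecting subsequence embedding automatically respects edges. This forces the alphabet order to use equality on the $X$-coordinate (distinct letters can interact differently with $S$), which is why finiteness of $X$ is essential; everything else is a routine combination of Higman's lemma, the pigeonhole reduction over the finitely many relations $S$, and closure of well-quasi-order under finite unions and finite products.
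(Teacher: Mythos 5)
Your proof is correct, and it is essentially the standard argument: the paper itself does not prove this statement but imports it from Petkov\v{s}ek's cited work, whose proof is exactly this reduction---fix the decoder $S$ by pigeonhole, encode each labelled letter graph as a word over the alphabet $X\times W$ ordered by equality on the $X$-coordinate and by $\leq$ on the $W$-coordinate, and apply Higman's lemma, noting that a letter-matching, order-preserving subsequence embedding induces a labelled induced-subgraph embedding. No gaps; the two points needing care (finitely many relations $S$, and equality rather than any weaker comparison on the $X$-coordinate) are both handled correctly.
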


When combined with the following observation, we have that any collection of graphs which can be embedded in a $Z$-grid with a fixed number of columns $k$ is well-quasi-ordered with respect to the labelled induced subgraph relation.

\begin{lemma}\label{lem:letter11}
For any $n, k \in \mathbb{N}$, $Z_{n,k}$ is a $k$-letter graph.
\end{lemma}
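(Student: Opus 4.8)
The plan is to realize $Z_{n,k}$ directly as a letter graph, by exhibiting a linear order on $V(Z_{n,k})$, a colouring of the vertices by (the number of) columns many letters, and a relation $S$. The colouring is the obvious one: give each vertex the label of the column it lies in. This is legitimate because a colour class is exactly a column, and columns are independent sets in $Z_{n,k}$ — no edge of type (1), (2) or (3) joins two vertices with the same first index — so each class is an independent set as a letter graph demands. The relation $S$ is then forced by the between-column adjacencies. For columns $a<b$ there are only three possibilities: the pair is joined by a \emph{complete bipartite block} (the ``diagonal'' edges of rule (3), when $a$ is even, $b$ odd, $b\ge a+3$), in which case we put \emph{both} $(a,b)$ and $(b,a)$ into $S$; the pair is joined by a \emph{chain} (consecutive columns, rules (1) or (2)), in which case exactly \emph{one} of $(a,b),(b,a)$ goes into $S$; or the pair is joined by \emph{no edges} (same parity, or $a$ odd and $b$ even with $b\ge a+3$), in which case \emph{neither} ordered pair is in $S$. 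Note that the complete and the empty blocks are then realised correctly \emph{regardless of the order}, since ``both in $S$'' forces all cross-edges and ``neither in $S$'' forces none; only the chain blocks constrain the linear order.

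The heart of the argument is therefore the construction of a single linear order that simultaneously realises \emph{every} chain block. Since the only order-sensitive pairs are the consecutive columns $i,i+1$, and these form a path $1\!-\!2\!-\!\cdots\!-\!n$, it suffices to interleave the vertices so that, for each $i$, columns $i$ and $i+1$ are shuffled together in exactly the way that makes ``$u$ precedes $v$'' agree with ``$uv\in E$'' for that chain. Concretely, I would fix an internal order of the rows within each column — increasing for odd-indexed columns and decreasing for even-indexed columns — and then list the vertices ``level by level'', alternating column $i$ with column $i+1$ at each level (with one column shifted a half-step ahead of the other); one checks this against the precise inequalities ``$j>j'$'' of rule (1) and ``$j\le j'$'' of rule (2). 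The reason the parity-dependent reversal is needed is that every interior column sits in \emph{two} chains at once — one with column $i-1$ on its left and one with column $i+1$ on its right — and these two chains pull the internal row-order of that column in opposite directions; reversing the row-order on alternate columns is precisely the device that makes both chains come out right under one global order.

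The step I expect to be the main obstacle is exactly this: verifying that one linear order can reconcile the two chains through each column at once, and checking that this order does not accidentally disturb the complete-bipartite ``diagonal'' blocks or the empty blocks (it does not, as remarked above, but this needs to be stated carefully, especially near the ends of the grid). A secondary, routine point is that $Z_{n,k}$ is an induced subgraph of $Z_{n+1,k}$ — just delete the last column — and that an induced subgraph of a $k$-letter graph is again a $k$-letter graph (restrict the defining word to the corresponding subsequence), so there is no loss in assuming the number of columns has whichever parity makes the level-by-level bookkeeping cleanest.
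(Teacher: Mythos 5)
Your framework is the right one and matches the paper's: one letter per column, with both ordered pairs in $S$ for each diagonal block, exactly one for each consecutive-column chain block, and none for the remaining pairs, so that only the chain blocks constrain the word. But the step you yourself identify as the main obstacle is where the proposal genuinely breaks, and the specific device you offer (row order increasing on odd columns, decreasing on even columns) is provably impossible to complete. Consider the block on columns $1$ and $2$, where the vertex in row $j$ of column~$1$ is adjacent to the vertex in row $j'$ of column~$2$ exactly when $j>j'$, and suppose this block is realised by the single pair $(1,2)\in S$. If $u$ precedes $u'$ within column $1$, then every column-$2$ vertex occurring after $u'$ also occurs after $u$, so the neighbourhood of $u$ in column $2$ contains that of $u'$; since these neighbourhoods are strictly nested and grow with the row index, column $1$ is forced to appear in strictly decreasing row order, and the symmetric argument (the neighbourhood of a column-$2$ vertex is the set of column-$1$ vertices preceding it) forces column $2$ into decreasing row order as well. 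Choosing the pair $(2,1)$ instead forces both columns to be increasing. Hence in every chain block the two columns must be traversed in the \emph{same} row direction, and since the chain blocks link all columns in a path, all $k$ columns must share one common direction. Your alternating prescription therefore fails already at the first block, for either choice of ordered pair, and the intuition behind it is off: the two chains through an interior column do not pull its internal order in opposite directions; their opposite orientations are absorbed by the choice of \emph{which} ordered pair goes into $S$, not by reversing the column.

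The paper's proof is exactly the repaired version of your plan: keep every column in the same internal direction and flip the ordered pairs instead. Take the word $w=(k\,(k-1)\cdots 1)^n$, i.e.\ read the grid row by row starting from the top row, each row from column $k$ down to column $1$, and set $S=R_1\cup R_2\cup D$ with $R_1=\{(2i-1,2i)\}$, $R_2=\{(2i+1,2i)\}$ (the odd column listed first for both chain types) and $D$ consisting of both orders of each diagonal pair. A direct check against rules (1)--(3) then finishes the proof; in particular the ``half-step shift'' you wanted is supplied by reading each row right-to-left, which places the column-$(i+1)$ vertex of a row just before the column-$i$ vertex of the same row and thereby gets the strict inequality in rule (1) and the non-strict one in rule (2) correct. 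Your closing remarks (hereditariness of $k$-letter graphs, adjusting parity) are fine but not needed once the order is chosen this way.
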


\begin{proof}
Let $X=\{1,2,\dots,k \}$, and define three sets of relations from $X\times X$:
\begin{itemize}
\item $R_1=\{(2i-1,2i): i=1,2,\dots,\lfloor \frac {k}{2}\rfloor\}$, 
\item $R_2 = \{(2i+1,2i): i = 1,2,\dots,\lceil \frac {k}{2}\rceil-1\}$, 
\item $D=\{(2i,2j+1), (2j+1,2i): 1 \leq i<j \leq \lceil \frac{k}{2} \rceil -1 \}$.
\end{itemize}
By inspection, with $S=R_1\cup R_2\cup D$ the $k$-letter graph associated with the word $w=(k\,(k-1)\cdots 1)^n$ is isomorphic to $Z_{n,k}$, where the letters of the word $w$ correspond to the vertices of the $Z$-grid. See Figure~\ref{fig:letter} for the case $k=7$.  
\end{proof}

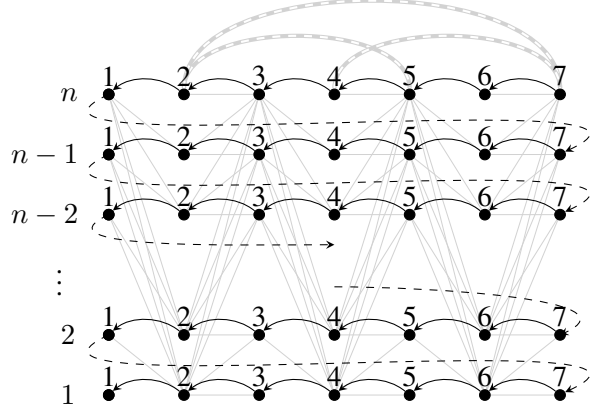
\begin{figure}[h]
\centering
\begin{tikzpicture}[>=stealth,yscale=0.8]
\foreach \j in {6,5,4,2,1}
{
\node[lightgray!70] (1-\j) at (1,\j) {};
\foreach \i in {2,4,6}
{
\pgfmathtruncatemacro\im{\i -1};
\pgfmathtruncatemacro\ip{\i +1};
\node[lightgray!70] (\i-\j) at (\i,\j) {};
\node[lightgray!70] (\ip-\j) at (\ip,\j) {};
\foreach \k in {\j,...,6}
{
\ifthenelse{\NOT \j = 3 \AND \NOT \k = 3}{
\draw[ultra thin,lightgray!70] (\ip-\k) -- (\i-\j);
\ifthenelse{\NOT \j = \k}{\draw[ultra thin,lightgray!70] (\im-\k) -- (\i-\j);}{}}{}
}
}
}
\foreach \a/\b in {2-6/5-6,2-6/7-6,4-6/7-6}
{
\draw[ultra thick,lightgray!70] (\a) edge[out=90,in=90] (\b);
\draw[thick,white] (\a) edge[out=90,in=90,dashed] (\b);
}
\draw (1,4) edge[out=-150,in=180,dashed,->] (4,3.5);
\draw (4,2.8) edge[out=0,in=30,dashed,->] (7,2);
\foreach \j in {6,5,4,2,1}
{
\node (7\j) at (7,\j) [label=above:7] {};
\pgfmathtruncatemacro\jp{\j + 1};
\ifthenelse{\NOT \j = 6 \AND \NOT \j=2}{\draw (1,\jp) edge[out=-150,in=30,dashed,->] (7\j);}{}
\foreach \i in {6,...,1}
{
\node (\i\j) at (\i,\j) [label=above:\i] {};
\pgfmathtruncatemacro\ip{\i + 1};
\draw[->] (\ip\j) edge[out=135,in=45] (\i\j);
}
}
\foreach \j/\l in {6/n,5/n-1,4/n-2,2/2,1/1}
\node (1\j a) at (1,\j) [label={[label distance=3mm]left:$\l$}] {};
\node [white] (dots) at (1,3) [label={[label distance=3mm]left:$\vdots$}] {};
\end{tikzpicture}
\caption{Reading $Z_{n,7}$ as a $7$-letter graph}
\label{fig:letter}
\end{figure}

In order to apply Theorem~\ref{thm:letter11} and Lemma~\ref{lem:letter11}, we require the following structural concept. For any two disjoint bipartite graphs 
$G_1=(X_1,Y_1,E_1)$ and $G_2=(X_2,Y_2,E_2)$ define the following three binary operations:
\begin{description}
\item[disjoint union] $G_1\oplus G_2=(X_1\cup X_2,\ Y_1\cup Y_2,\ E_1\cup E_2)$;
\item[join] $G_1\otimes G_2 = (X_1\cup X_2,\ Y_1\cup Y_2,\ E_1\cup E_2\cup (X_1\times Y_2) \cup (X_2\times Y_1))$\newline
(that is, the bipartite complement of the disjoint union of the bipartite complements of $G_1$ and $G_2$);
\item[skew join]
$G_1\oslash G_2=(X_1\cup X_2,\ Y_1\cup Y_2,\ E_1\cup E_2\cup (X_1\times Y_2))$.
\end{description}

These three operations define a decomposition scheme known as the \emph{canonical decomposition}, which takes a bipartite graph $G$ and whenever $G$ has one of the following three forms 
$G=G_1\oplus G_2$, $G=G_1\otimes G_2$, or $G=G_1\oslash G_2$,
partitions it into $G_1$ and $G_2$, and then the scheme applies to $G_1$ and $G_2$ recursively. 

Graphs that cannot be decomposed into smaller graphs under this scheme are called \emph{canonically prime}. The following theorem allows us to restrict our attention from now on to canonically prime graphs only.

\begin{theorem}[Atminas and Lozin~\cite{atminas:labelled-induced:}]\label{thm:canonical}
Let $\C$ be a hereditary class of bipartite graphs. If the canonically prime graphs in $\C$ are well-quasi-ordered by the labelled induced subgraph relation, then all graphs in $\C$ are well-quasi-ordered by the labelled 
induced subgraph relation.  
\end{theorem}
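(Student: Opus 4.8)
The plan is to encode each labelled graph of $\C$ by a finite \emph{decomposition tree}, to well-quasi-order these trees by a Kruskal-type theorem, and then to show that an embedding between two such trees always yields a labelled induced embedding of the underlying graphs. Throughout I would fix an arbitrary well-quasi-ordered label set $(W,\le)$ and treat bipartite graphs as carrying a designated bipartition $(X,Y)$; the side of a vertex can be folded into its $W$-label without harming the well-quasi-order, so nothing is lost by doing this.

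First I would attach to each $W$-labelled graph $(G,\ell)$ in $\C$ a rooted tree $T(G,\ell)$ as follows. If $G$ is canonically prime (this includes single vertices), then $T(G,\ell)$ is a single leaf, decorated by the isomorphism type of the labelled graph $(G,\ell)$. Otherwise $G=G_1\circ G_2$ for some $\circ\in\{\oplus,\otimes,\oslash\}$ and nonempty $G_1,G_2$, and $T(G,\ell)$ has a root decorated by $\circ$ with the two subtrees $T(G_1,\ell|_{G_1})$ and $T(G_2,\ell|_{G_2})$ below it; the children of a $\oplus$- or $\otimes$-node I would treat as an unordered pair, but those of a $\oslash$-node as an ordered pair (left part first), so as to record the asymmetry of the skew join. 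Because each step replaces $G$ by strictly smaller nonempty graphs, this recursion terminates and $T(G,\ell)$ is finite. Note that I do not need the decomposition to be unique: it is enough to fix one such tree for each graph of the infinite sequence under consideration.

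Next I would observe that all decorations come from a well-quasi-ordered set: leaf decorations are $W$-labelled canonically prime graphs of $\C$, which are well-quasi-ordered under the labelled induced subgraph relation by the hypothesis of the theorem, and internal decorations come from the three-element antichain $\{\oplus,\otimes,\oslash\}$, with a leaf decoration and an internal decoration declared incomparable. An appropriate form of Kruskal's tree theorem~\cite{kruskal:the-theory-of-w:} -- for trees decorated by a well-quasi-order in which $\oslash$-nodes carry ordered children and the other internal nodes carry unordered children -- then gives that the trees $T(G,\ell)$ are well-quasi-ordered by the natural decoration-respecting homeomorphic embedding $\preceq$: an injection preserving the ancestor order, preserving the left-to-right order at $\oslash$-nodes, and sending each node to one with a $\ge$ decoration. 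The crux is then the claim that $T(G_1,\ell_1)\preceq T(G_2,\ell_2)$ implies that $(G_1,\ell_1)$ is a labelled induced subgraph of $(G_2,\ell_2)$. I would prove this by induction on $|V(T(G_2,\ell_2))|$, using two facts that follow from a direct look at adjacencies: (a) each of $\oplus,\otimes,\oslash$ is monotone under labelled induced subgraphs, so that $H_i\le_{\mathrm{ind}} G_i$ implies $H_1\circ H_2\le_{\mathrm{ind}} G_1\circ G_2$; and (b) $G_1$ and $G_2$ are each labelled induced subgraphs of $G_1\circ G_2$. In the induction, if the root of $T(G_1,\ell_1)$ is sent to a non-root node, its whole image lies in one of the two subtrees hanging from the root of $T(G_2,\ell_2)$, whose graph is an induced subgraph of $G_2$ by (b), so the inductive hypothesis finishes that case; if the two roots are identified then, since leaf and internal decorations are incomparable, either both roots are leaves -- and then comparability of the leaf decorations is exactly the desired conclusion -- or both are internal with the same operation $\circ$, in which case, using that a homeomorphic embedding preserves lowest common ancestors together with the order at $\oslash$-nodes, the two subtrees of $T(G_1,\ell_1)$ embed ``in place'' into the two subtrees of $T(G_2,\ell_2)$, and (a) lifts the inductive conclusions on the pieces to $G_1\circ G_2$. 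Granting the claim, any infinite sequence of $W$-labelled graphs in $\C$ yields an infinite sequence of decomposition trees; Kruskal's theorem produces $i<j$ with $T(G_i,\ell_i)\preceq T(G_j,\ell_j)$, and hence $(G_i,\ell_i)\le_{\mathrm{ind}}(G_j,\ell_j)$. So $\C$ is labelled well-quasi-ordered.

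I expect the main obstacle to be the interface between tree embedding and graph embedding, for two linked reasons. First, the notion of tree embedding has to be chosen carefully: it must force leaves to map to leaves (so that the hypothesis on prime graphs can be applied) and respect the order of children at $\oslash$-nodes (otherwise the asymmetry of the skew join destroys the monotonicity in (a)), yet still be weak enough (in particular, unordered at the $\oplus$- and $\otimes$-nodes, as those operations are commutative) for a Kruskal-type theorem to apply; one should double-check that this mixed ordered/unordered variant of Kruskal's theorem is genuinely available. Second, fact (b) for the skew join needs a moment's thought: it says that deleting vertices from the ``inside'' of a $\oslash$-chain leaves the induced structure of the surviving chain unchanged, which must be checked against the definition of $\oslash$. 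Everything else -- termination of the decomposition, the elementary adjacency checks behind (a), and the observation that canonicity of the decomposition is never needed -- should be routine.
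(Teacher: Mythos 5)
There is nothing to compare against inside this paper: Theorem~\ref{thm:canonical} is imported verbatim from Atminas and Lozin~\cite{atminas:labelled-induced:} and no proof of it appears here. Your proposal is correct, and it is essentially the standard argument (and the one in the spirit of the cited source): fix one decomposition tree per labelled graph, note that the leaf decorations are labelled canonically prime graphs lying in $\C$ by heredity and hence form a wqo by hypothesis, apply a Kruskal/Higman-type theorem to the decorated trees, and transfer tree embeddings to labelled induced-subgraph embeddings using the monotonicity of $\oplus$, $\otimes$, $\oslash$ and the fact that each factor sits inside the composition as an induced subgraph. The two points you flag as potential obstacles dissolve on inspection. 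First, you do not need a bespoke mixed ordered/unordered tree theorem: the trees are binary, so you may simply regard the children as ordered at \emph{every} node and invoke the plane-tree version of Kruskal's theorem (equivalently, Higman's theorem on terms over a finite signature with leaf labels from a wqo); the resulting embedding relation is stronger than the one you need, so wqo for it implies wqo for yours, and it still supports your structural induction, with the commutativity of $\oplus$ and $\otimes$ never being required. Second, fact (b) for the skew join is immediate from the definition: the edges added in $G_1\oslash G_2$ join $X_1$ to $Y_2$ only, so the induced subgraph on $V(G_1)$ (respectively $V(G_2)$) is exactly $G_1$ (respectively $G_2$). Your device of folding the side of the designated bipartition into the vertex labels is also precisely what is needed to make the cross-edge bookkeeping in fact (a) valid for $\otimes$ and $\oslash$, and it is legitimate because the hypothesis quantifies over all wqo label sets, while the conclusion for the enriched labels implies the conclusion for the original ones.
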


\subsection{The canonical antichain}

We now have the concepts from the literature that we need to prove that $\{S_k^\circ\}$ is canonical. Our task now is to show that the canonically prime graphs in a subclass of bichain graphs with only finitely many of the (unlabelled) graphs $\{S_k\}$ are embeddable into $Z_{n,k}$ for some $k$. 

Every vertex $v\in V(Z_{n,k})$ can be represented by its row/column coordinates, where row 1 is the bottom row, and column 1 is the leftmost row. For notational convenience in our proofs, define $\row(v)$ to mean the index of the row of $Z_{n,k}$ in which $v$ lies, and $\col(v)$ to mean the index of the column of $Z_{n,k}$ containing $v$.

We begin with a useful observation about how canonically prime graphs can embed into a $Z$-grid.

\begin{lemma}\label{lem:nogaps}
Let $G$ be a canonically prime bichain graph, and $\phi$ an embedding of $G$ into $Z_{n,k}$, for some $n,k$. Then $\phi(G)$ occupies a consecutive set of columns of $Z_{n,k}$.
\end{lemma}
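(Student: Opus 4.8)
The plan is to show that if $\phi(G)$ did not occupy a consecutive set of columns of $Z_{n,k}$, then $G$ would admit a skew join (or disjoint union, or join) decomposition, contradicting the hypothesis that $G$ is canonically prime. First I would assume, after discarding empty columns, that $\phi(G)$ meets columns in a set that is \emph{not} an interval, so that there is an index $m$ with $m, m+1, \dots$ all positive (i.e.\ some column $\le m$ and some column $> m+1$ are used by $\phi(G)$, but possibly column $m+1$ is skipped, or more carefully: there is a ``split point'' $m$ so that both $\{v : \col(\phi(v)) \le m\}$ and $\{v : \col(\phi(v)) > m\}$ are nonempty and there is a gap).

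The key observation is the very rigid structure of the edges of $Z_{n,k}$: the only edges are between consecutive columns ($i$ odd to $i+1$, according to the two inequality patterns $j > j'$ and $j \le j'$) together with the diagonal edges, which go from every vertex of an even column $i$ to every vertex of every odd column $i' \ge i+3$. So I would split into cases according to the parity of the column immediately to the left of (and to the right of) the gap. The crucial point is that across a gap of size at least one between the left block $L = \{v : \col(\phi(v)) \le m\}$ and the right block $R = \{v : \col(\phi(v)) \ge m'\}$ (with $m' \ge m+2$), the edges present between $L$ and $R$ are governed \emph{entirely} by the diagonal rule: every vertex in an even column of $L$ is joined to every vertex in an odd column of $R$, and there are no other $L$–$R$ edges (since the consecutive-column edges cannot jump the gap, and diagonals only emanate from even columns toward odd columns further right). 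Letting $X_1, Y_1$ be the even-column and odd-column vertices of $L$, and $X_2, Y_2$ the even-column and odd-column vertices of $R$ (matching up with the bipartition classes of the ambient bipartite graph so that the ``$X$'' side is the even columns), the edge set between $L$ and $R$ is precisely $X_1 \times Y_2$ — which is exactly the extra edge set in the skew join $L \oslash R$. Hence $\phi(G)[L \cup R] = \phi(G)[L] \oslash \phi(G)[R]$, so $G$ decomposes as a skew join, contradicting canonical primeness. (If one of $X_1 \times Y_2$ or $X_2 \times Y_1$ were empty for parity reasons, one gets disjoint union or a relabelling of a skew join instead, but these are all canonical decompositions, so the contradiction stands.)

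I expect the main obstacle to be bookkeeping the parity cases cleanly: one needs to make sure that whichever columns border the gap, the induced edges between the two sides are always of the ``all-or-nothing between one bipartition class on the left and one on the right'' form required by $\oslash$ (or $\oplus$). Concretely I would verify: (i) no consecutive-column edge can cross a gap of size $\ge 1$; (ii) any diagonal edge crossing the gap starts at an even column on the left and ends at an odd column on the right, hence involves \emph{all} vertices of those columns; and (iii) therefore, grouping all left-even-column vertices together and all right-odd-column vertices together, we get the complete bipartite pattern $X_1 \times Y_2$ between the sides and nothing else. Since $G$ is connected to its image under $\phi$ only through these columns, this yields the decomposition. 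A minor subtlety is that $Z$-grid bipartition: the even columns form one part and the odd columns the other, so the sets $X_1, Y_1, X_2, Y_2$ are genuinely sub-parts of a consistent bipartition, which is what the definition of $\oslash$ requires; I would note this explicitly. Once the decomposition is exhibited, the contradiction with canonical primeness is immediate, completing the proof.
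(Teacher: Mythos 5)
Your proposal is correct and follows essentially the same route as the paper: a skipped column splits $\phi(G)$ into a left part $L$ and right part $R$, and the $Z$-grid's edge rules force the cross edges to be exactly (even columns of $L$) $\times$ (odd columns of $R$), exhibiting $G$ as a skew join $L\oslash R$ and contradicting canonical primeness. The paper's version is just a terser statement of the same column-neighbourhood observation, so there is nothing to add.
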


\begin{proof}
Suppose that there exists a column in $Z_{n,k}$ that contains no vertices of $\phi(G)$, and suppose the index of this column is $c$. Now let $L$ be the induced subgraph on all vertices $v\in \phi(G)$ for which $\col(v)<c$, and $R$ the induced subgraph on all vertices $v\in \phi(G)$ for which $\col(v)>c$. Note that every $v\in\phi(G)$ is in exactly one of $L$ or $R$.

By inspection, except for the immediately preceding column, the vertices in any given column of index $c$ of the grid $Z_{n,k}$ are adjacent either to none of the vertices in the columns to the left of it (when $c$ is even), or they are adjacent to all even-indexed, and no odd-indexed columns to the left. From this, the embedding $\phi(G)$ tells us that $G=L\oslash R$ is a skew join, which is impossible unless one of $R$ or $L$ contains no vertices, completing the proof. 
\end{proof}

Our next lemma is the crucial step to understand the relationship between canonically prime graphs and the graphs $S_k$. 

\begin{lemma}\label{lem:induced-sk}Let $G\in \Free(S_{k},S_{k+1},\dots)$ be a canonically prime bichain graph. Then $G$ can be embedded in $Z_{n,2k}$ for some $n$.\end{lemma}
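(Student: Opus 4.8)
\textbf{Proof plan for Lemma~\ref{lem:induced-sk}.}

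The plan is to take a canonically prime bichain graph $G$ that contains none of $S_k, S_{k+1},\dots$, embed it (using Theorem~\ref{thm:universal}, say into $Z_{N,N}$ for $N=|V(G)|$), apply Lemma~\ref{lem:nogaps} so that $\phi(G)$ occupies a consecutive block of columns, and then argue that this block cannot be too wide: specifically, that $G$ can be re-embedded into a $Z$-grid with at most $2k$ columns. The heart of the matter is to show that if $\phi(G)$ genuinely spreads across too many columns of a $Z$-grid, then one can read off a copy of $S_k$ as an induced subgraph, contradicting $G\in\Free(S_k,S_{k+1},\dots)$.

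First I would examine the structure of the $Z$-grid to understand where copies of $S_k$ live. Recall $S_k$ has vertices $x_i,y_i$ with $x_iy_j$ an edge iff $j=i$ or $j\ge i+2$; so each $x_i$ is adjacent to $y_i$, non-adjacent to $y_{i+1}$, and adjacent to all later $y_j$. This is exactly the adjacency pattern one gets by picking one vertex from each of $k$ suitably-spaced columns of a $Z$-grid (the pattern ``adjacent to the immediate right-neighbour via the chain edge, then all the diagonal edges to odd columns further right''). Concretely, I expect that choosing appropriate vertices, one from each of a consecutive run of even columns together with their odd right-neighbours — i.e.\ from $k$ consecutive ``column-pairs'' $(2i{-}1, 2i)$ — and tuning the row within each pair, produces an induced $S_k$. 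The key combinatorial claim will be: \emph{if $\phi(G)$ meets strictly more than $2k$ columns of the $Z$-grid, then $\phi(G)$ contains an induced copy of $S_k$.} Here I would use Lemma~\ref{lem:nogaps} to know the columns met are consecutive, so ``more than $2k$ columns'' means at least $k+1$ full column-pairs are non-empty; picking one vertex from each such pair and checking that non-empty columns force the $S_k$-pattern (possibly after using the row freedom within a column, and the automorphisms/reflections of the grid to align orientations) gives the contradiction.

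Once that claim is in hand, the conclusion is immediate: since $G\in\Free(S_k,\dots)$, the image $\phi(G)$ meets at most $2k$ columns of $Z_{N,N}$; deleting the empty columns and relabelling gives an embedding of $G$ into $Z_{n,2k}$ for $n=N$ (or smaller), which is exactly the statement. I would note that the deletion of empty columns of a $Z$-grid again yields a $Z$-grid (on fewer columns), so no extra care is needed there beyond re-indexing.

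The main obstacle I anticipate is making the combinatorial claim precise: verifying that from $k+1$ consecutive non-empty column-pairs of the $Z$-grid one can \emph{always} extract an induced $S_k$, no matter which rows the vertices of $\phi(G)$ occupy within each pair. The subtlety is that a column-pair of $Z_{n,k}$ contains two columns, and the within-pair adjacency (the ``reversed chain'' between an odd column and its right even neighbour) together with the diagonal edges between pairs must be shown to realise the $S_k$-adjacency for \emph{some} choice of one vertex per pair — this may require a short pigeonhole or monotonicity argument on the rows, and careful bookkeeping of which columns are odd vs.\ even (hence the factor $2k$ rather than $k$). I would handle this by fixing, in each non-empty column-pair, a vertex in a suitable (e.g.\ extreme) row so that its neighbourhood to the right is forced to be ``all odd columns from some point on,'' matching the $x_i$-pattern, and then checking the non-edges $x_iy_{i+1}$ come for free from the structure of the diagonal edges (which skip $i'=i+1$). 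The rest of the argument is routine once this local picture is nailed down.
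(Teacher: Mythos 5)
Your overall frame (embed $G$ into a universal $Z$-grid, use Lemma~\ref{lem:nogaps} to get a consecutive block of columns, then argue that spanning more than $2k$ columns forces an induced $S_k$) is the same as the paper's, but the key claim as you propose to prove it is false, and this is where the real work of the lemma lies. You intend to deduce an induced $S_k$ from the mere fact that more than $2k$ consecutive columns are non-empty, ``picking one vertex from each column-pair'' with a short monotonicity/pigeonhole argument on rows. Here is a counterexample to that claim: in $Z_{n,m}$ take one vertex per column, at rows $1,2,1,2,\dots$ (row $1$ in odd columns, row $2$ in even columns). Then no two consecutive columns are joined by a chain edge, and the only edges are the diagonal ones, so the resulting graph is a half-graph (nested neighbourhoods), hence $2K_2$-free and therefore $S_k$-free for every $k\geq 2$ --- yet it occupies $m$ consecutive non-empty columns with $m$ arbitrarily large. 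This graph is of course a skew join, so it does not contradict the lemma itself; but your proposed proof of the intermediate claim never uses canonical primality after Lemma~\ref{lem:nogaps}, and the example shows no such proof can exist. The point you are missing is that when building the descending sequence of vertices across columns, the available vertices may force you to go back \emph{up} in the rows, and it is precisely there that the paper invokes primality again: in its inductive selection of $v_1,\dots,v_{2k}$, when the set $A_i$ of vertices below-and-to-the-right of $v_i$ is empty, it uses ``otherwise $G$ is a skew join'' three further times to show the auxiliary sets $C_i$, $D_i$, $E_i$, $F_i$ are non-empty, which yields a staircase of five vertices letting the construction continue. Without this repeated use of primality your ``one vertex per pair'' selection simply does not produce the matching edges $x_iy_i$ of $S_k$.

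A secondary, fixable inaccuracy: deleting empty columns of a $Z$-grid does not in general yield a $Z$-grid, because the diagonal edges depend on column parity (e.g.\ two even columns have no edges between them, whereas in a relabelled smaller grid they could become a consecutive odd--even pair). Since by Lemma~\ref{lem:nogaps} the empty columns form a prefix and a suffix, this is harmless provided you only translate by an even number of columns (this is why the paper normalises so that $\phi(G)$ meets the first or second column, and starts its selection in the first or third), but as stated your reduction to $Z_{n,2k}$ needs this parity care.
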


\begin{proof}
Suppose that $G$ does not embed in $Z_{n,2k}$ for any $n$. Aiming for a contradiction, we will show that $G$ contains $S_{k}$. 

Let $m=|G|$. Since $G$ is a bichain graph, by Theorem~\ref{thm:universal} it must embed in the $m$-universal graph $Z_{m,m}$. By Lemma~\ref{lem:nogaps}, any such embedding must occupy a consecutive set of columns, since $G$ is canonically prime. Moreover, by our assumption that $G$ does not embed in $Z_{n,2k}$, every embedding of $G$ into $Z_{m,m}$ must occupy at least $2k+1$ consecutive columns. Pick one such embedding $\phi$, and note that we  may assume that $\phi(G)$ contains at least one vertex in the first (i.e.\ leftmost) or second column of $Z_{m,m}$.

We now choose a sequence of vertices $v_1,v_2,v_3,\dots v_{2k}$ from $\phi(G)$ as follows. We proceed inductively, at each step choosing $v_i$ so that $\col(v_i) \leq i+1$. If $\phi(G)$ contains a vertex in the leftmost column of $Z_{m,m}$, then set $v_1$ to be the highest vertex in this column. Otherwise, set $v_1$ to be the highest vertex in the third column of $Z_{m,m}$. In either case, there are still at least $2k-1$ contiguous columns containing vertices from $\phi(G)$ lying in columns with index greater than $\col(v_1)$.

Next, suppose $v_1,\dots,v_{i}$ have been chosen for some $1\leq i< 2k$, with $\col(v_j) \leq j+1$ for all $j=1,\dots,i$. The following argument is illustrated in Figure~\ref{fig:embeddingSinZ}. Define \[A_i = \{v\in\phi(G):\col(v) = \col(v_i)+1 \text{ and } \row(v)<\row(v_i)\},\] that is, the set of all vertices of $\phi(G)$ in the column to the right of $v_i$, and in a row strictly below $v_i$. If $A_i\neq \emptyset$, then choose $v_{i+1}$ from $A_i$ so that $\row(v_{i+1})$ is maximal.

If $A_i=\emptyset$, then the set $B_i=\{v\in\phi(G):\col(v) = \col(v_i)+1 \text{ and } \row(v)\geq\row(v_i)\}$ must be non-empty, since $\phi(G)$ has at least one vertex in the column with index $\col(v_{i})+1$ (since  $\col(v_{i})\leq i+1 < 2k+1$). Pick $b_i \in B_i$ so that $\row(b_i)$ is minimal. Now define
\[C_i = \{v\in\phi(G):\col(v) = \col(b_i)-1 \text{ and } \row(v)>\row(b_i)\},\]
that is, all vertices of $\phi(G)$ in the column to the left of $b_i$ but above $b_i$. Note that $C_i$ must be non-empty, otherwise $G$ is a skew join: this can be seen by partitioning the vertices of $\phi(G)$ into two, namely all vertices in columns with indices at most $\col(v_i)$, and all those in columns with indices strictly greater than $\col(v_i)$. Thus, pick $c_i\in C_i$ so that $\row(c_i)$ is minimal.

Next, define
\[D_i = \{v\in\phi(G):\col(v) = \col(c_i)-1 \text{ and } \row(v)>\row(c_i)\}.\]
By a similar argument to that for $C_i$, observe that $D_i$ must be non-empty, otherwise $G$ is a skew join between vertices which $\phi$ maps to $C_i$ or columns to the right of $C_i$, and all other vertices. Thus, pick $d_i\in D_i$ so that $\row(d_i)$ is minimal.

Similarly, let 
\[E_i = \{v\in\phi(G):\col(v) = \col(d_i)-1 \text{ and } \row(v)>\row(d_i)\},\]
which must be non-empty (consider the set of vertices in $D_i\cup C_i\cup \{\text{all vertices further right}\}$), pick $e_i\in E_i$ so that $\row(e_i)$ is minimal. Finally, define
\[F_i = \{v\in\phi(G):\col(v) = \col(e_i)-1 \text{ and } \row(v)>\row(e_i)\},\]
which must be non-empty (consider $E_i\cup D_i\cup C_i\cup \{\text{all vertices further right}\}$). We now pick $v_{i+1}\in F_i$, $v_{i+2}\in E_i$, $v_{i+3}\in D_i$, $v_{i+4}\in C_i$ and $v_{i+5}\in B_i$ in turn, each chosen as high as possible while still satisfying the condition $\row(v_{i+1})>\row(v_{i+2})>\row(v_{i+3})>\row(v_{i+4})>\row(v_{i+5})$. (If $i+5 > 2k$ then we can simply stop once we have picked $v_{2k}$.) Note that the case $A_i=\emptyset$ can in fact only happen when $i\geq 5$, otherwise $G$ fails to be canonically prime or $v_1$ was not chosen correctly.

\begin{figure}
\centering
\begin{tikzpicture}[yscale=0.4,xscale=0.8,
					every label/.append style={rectangle}]
					
{
\tikzstyle{every node}=[circle, draw=black!20, fill=black!20,
                        inner sep=0pt, minimum width=4pt]
\tikzset{every path/.append style={draw=black!20}}
\mnnodearray{9}{16}
\foreach \col in {1,3,5,7}
	\downnohoriz{\col}{16};
\foreach \col in {2,4,6,8}
	\uphoriz{\col}{16};
\foreach \i/\j in {2/5,2/7,2/9,4/7,4/9,6/9}
	\curvetop{\i}{\j}{16};
}
\foreach \x/\y/\l in {4/6/$v_{i-2}$,4/15/$v_{i+2}$,6/10/$v_{i+4}$,7/8/$v_{i+5}$}
\node (\x-\y-a) at (\x,\y) [label={[label distance=0.8mm]right:{\tiny\l}}] {};
\foreach \x/\y/\l in {2/9/$v_{i-4}$,3/7/$v_{i-3}$,5/5/$v_{i-1}$,6/3/$v_i$,3/16/$v_{i+1}$,5/11/$v_{i+3}$}
\node (\x-\y-a) at (\x,\y) [label={[label distance=0.8mm]left:{\tiny\l}}] {};
\foreach \a/\b in {2-9/3-16,3-7/4-6,3-16/4-6,3-16/4-15,4-6/5-11,5-5/6-3,5-11/6-3,5-11/6-10,6-3/7-8}
\draw [thick] (\a-a) -- (\b-a);
\path[draw, thin, rounded corners, gray] (6.75,2.5) rectangle (7.25,0.5);
\path[draw, thin, rounded corners, gray] (6.75,2.5) rectangle (7.25,16.5);
\path[draw, thin, rounded corners, gray] (5.75,6.5) rectangle (6.25,16.5);
\path[draw, thin, rounded corners, gray] (4.75,8.5) rectangle (5.25,16.5);
\path[draw, thin, rounded corners, gray] (3.75,12.5) rectangle (4.25,16.5);
\path[draw, thin, rounded corners, gray] (2.75,14.5) rectangle (3.25,16.5);
\foreach \x/\y/\l in {7/0/$A_i$,7/17/$B_i$,6/17/$C_i$,5/17/$D_i$,4/17/$E_i$,3/17/$F_i$}
\node[draw=none, fill=none] at (\x,\y) {\l}; 
\foreach \x/\y/\l in {7/3/$b_i$,6/7/$c_i$,5/9/$d_i$,4/13/$e_i$}
\node [gray] (\x-\y-a) at (\x,\y) [label={[label distance=0.1mm]91:{\tiny\l}}] {};
\end{tikzpicture}
\caption{The description of the sets $A_i$, $B_i$, $C_i$, $D_i$, $E_i$ and $F_i$ in the proof of Lemma~\ref{lem:induced-sk}, and the related vertices when $A_i=\emptyset$.}
\label{fig:embeddingSinZ}
\end{figure}
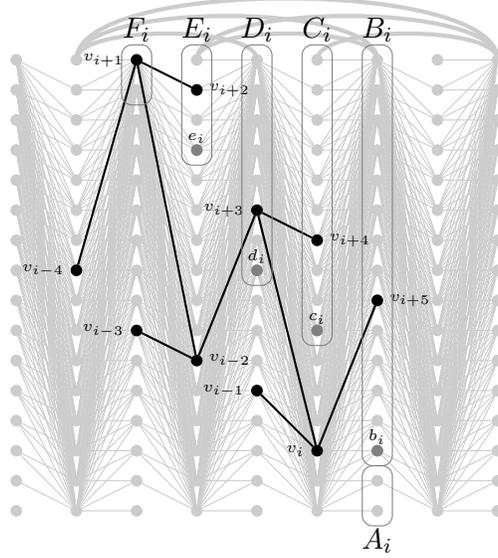

By inspection, we now observe that the vertices $v_1,\dots, v_{2k}$ induces a copy of $S_k$, by setting $x_i = v_{2i-1}$ and $y_i = v_{2i}$ for $i=1,2,\dots, k$. Thus $G$ contains $S_k$, which yields the desired contradiction.  
\end{proof}

We are now in a position to state and prove the main result of this section.

\begin{theorem}
The antichain $\{S_k^\circ\}$ is a canonical antichain for the class of bichain graphs, under the labelled induced subgraph ordering.
\end{theorem}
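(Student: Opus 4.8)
The plan is to assemble the machinery already in place: Lemma~\ref{lem:induced-sk} (a canonically prime bichain graph that avoids all sufficiently large $S_k$ embeds in a bounded-width $Z$-grid), Lemma~\ref{lem:letter11} (bounded-width $Z$-grids are letter graphs), Theorem~\ref{thm:letter11} (letter graphs are labelled well-quasi-ordered), and Theorem~\ref{thm:canonical} (labelled well-quasi-ordering of a hereditary class of bipartite graphs is controlled by its canonically prime members). Since the text has already shown that $\{S_k^\circ\}$ is a labelled infinite antichain in the bichain graphs, all that remains is the well-quasi-ordering half of the definition of canonical.

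First I would set up the reduction. Let $\D$ be a hereditary subclass of the bichain graphs meeting $\{S_k:k\geq 1\}$ in only finitely many graphs. Using the earlier observation that $S_k$ is an induced subgraph of $S_{k'}$ whenever $k\leq k'$, the set $\{k: S_k\in\D\}$ is a down-set of $\N$; being finite, it is contained in $\{1,\dots,K-1\}$ for some $K$, so $\D\subseteq\Free(S_K,S_{K+1},\dots)$. Because the bichain graphs are bipartite, $\D$ is a hereditary class of bipartite graphs, so Theorem~\ref{thm:canonical} applies and it suffices to prove that the set $\mathcal{P}$ of canonically prime graphs in $\D$ is well-quasi-ordered by the labelled induced subgraph relation.

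Next I would bound the ``width''. Every $G\in\mathcal{P}$ is a canonically prime bichain graph lying in $\Free(S_K,S_{K+1},\dots)$, so Lemma~\ref{lem:induced-sk} provides an embedding of $G$ into $Z_{n,2K}$ for some $n$ depending on $G$. By Lemma~\ref{lem:letter11}, $Z_{n,2K}$ is a $2K$-letter graph, and the class of $2K$-letter graphs is closed under induced subgraphs (restrict the defining word to the relevant positions), so $G$ is itself a $2K$-letter graph. Hence $\mathcal{P}$ is a subset of the class of $2K$-letter graphs. By Theorem~\ref{thm:letter11} this class is labelled well-quasi-ordered, and any subset of a labelled well-quasi-ordered class is labelled well-quasi-ordered (an infinite labelled antichain inside $\mathcal{P}$ would be one in the larger class), so $\mathcal{P}$ is labelled well-quasi-ordered. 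Theorem~\ref{thm:canonical} then gives that all of $\D$ is labelled well-quasi-ordered, which is exactly what canonicity of $\{S_k^\circ\}$ requires.

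The genuine content of the argument has already been isolated in Lemma~\ref{lem:induced-sk}, so within the theorem itself the work is essentially bookkeeping; the only points needing care are the down-set observation that turns ``finitely many $S_k$'' into membership in $\Free(S_K,S_{K+1},\dots)$, and checking that all the hypotheses of Theorems~\ref{thm:letter11} and~\ref{thm:canonical} (bipartiteness, hereditariness, closure of letter graphs under induced subgraphs) are satisfied.
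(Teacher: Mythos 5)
Your proposal is correct and follows essentially the same route as the paper's proof: reduce to $\C\subseteq\Free(S_K,S_{K+1},\dots)$, apply Lemma~\ref{lem:induced-sk} and Lemma~\ref{lem:letter11} to see the canonically prime graphs are $2K$-letter graphs, then conclude via Theorem~\ref{thm:letter11} and Theorem~\ref{thm:canonical}. Your extra care (the down-set observation and the closure of letter graphs under induced subgraphs) just makes explicit steps the paper leaves implicit.
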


\begin{proof}
First, by the comments at the start of this section, $\{S_k^\circ\}$ is an infinite antichain in the class of bichain graphs, under the labelled induced subgraph ordering.

Next, consider any subclass $\C$ of the bichain graphs which has only finite intersection with the antichain $\{S_k^\circ\}$. This means that there exists $k$ such that $\C\subseteq\Free(S_k,S_{k+1},\dots)$. 

By Lemma~\ref{lem:induced-sk}, for each canonically prime graph $G$ in $\C$ there exists $n$ such that $G$ can be embedded in $Z_{n,2k}$. This implies, by Lemma~\ref{lem:letter11}, that all canonically prime graphs in $\C$ are $2k$-letter graphs. 

By Theorem~\ref{thm:letter11}, the canonically prime graphs of $\C$ are thus well-quasi-ordered with respect to the labelled induced subgraph relation, and Theorem~\ref{thm:canonical} allows us to conclude that the whole subclass $\C$ is well-quasi-ordered with respect to the labelled induced subgraph relation, completing the proof.
\end{proof}

The proof that split permutation graphs contains a canonical labelled antichain is analogous. The antichain in this case is built upon a sequence of graphs $T_k$ for which $T_k^*=S_k$ (i.e.\ each graph $T_k$ is formed from $S_k$ by replacing one part of the bipartition with a clique). The labelled infinite antichain is then $\{T_k^\circ\}$, in which the first and last pair of vertices of each graph are coloured differently from the others (here, copies of $P_4$ must embed in consecutive pairs of vertices, rather than the copies of $2K_2$ found in $S_k$). 

To see that the grid $Z_{n,k}^*$ (formed by connecting together all vertices in all even columns) is a $k$-letter graph, one needs only add pairs of letters corresponding to even columns in the set of connections given in the proof of Lemma~\ref{lem:letter11}. Next, although we cannot directly use the notion of canonically prime, we can define an analogue for split graphs with the same three constructions of disjoint union, join and skew join, but where one side of the partition is always a clique.  With this, the equivalent to Theorem~\ref{thm:canonical} is readily obtained, following the techniques of~\cite{atminas:labelled-induced:}. The rest of the argument to prove that $\{T_k^\circ\}$ is a canonical labelled antichain follows.

Finally, it is worth observing that the antichain $\{T_k^\circ\}$ of split permutation graphs is well-known in the study of permutation classes, since it corresponds to the ``Widdershins antichain'' of permutations (because of the way it appears to spirals anticlockwise). Thus, another (largely analogous) method to establish that $\{T_k^\circ\}$ is canonical for split permutation graphs would be to build on the structural results of Albert and Vatter~\cite{albert:generating-and-:}.

\section{Concluding remarks}
\label{sec:con}

In this paper, we have exhibited two classes that possess the simultaneous properties of being finitely-defined, minimal of unbounded clique-width, and of containing a canonical labelled infinite antichain. We observe that while the class of split permutation graphs is self-complementary in the sense that $G$ belongs to the class if and only if the complement of $G$ does, this is not the case for bichain graphs. We can, therefore, describe one more class possessing all three properties (finitely-defined, minimal of unbounded clique-width, and of containing a canonical labelled infinite antichain), namely, the class of complements of bichain graphs. We believe that various other graph transformations  (e.g.\ other sequences of local complementations) may lead to more examples of such classes.

The existence of classes possessing the first two of these three properties are sufficient to disprove Conjecture~8 of Daligault, Rao and Thomass\'e~\cite{daligault:well-quasi-orde:}. However, in the same paper, the authors proposed another conjecture that relate the second two properties, namely labelled well-quasi-ordering and clique-width. Restricting our ordered set of labellings $(W,\leq)$ to the special case where $|W|=2$ and $W$ is an antichain, the authors of~\cite{daligault:well-quasi-orde:} conjecture that a hereditary graph class which is well-quasi-ordered under labellings by $W$ must have clique-width that is bounded by a constant. 

Note that this is equivalent to asking whether every \emph{minimal} hereditary class of unbounded clique-width has a labelled infinite antichain that uses at most two labels. Indeed, if a hereditary class $\C$ of unbounded clique-width does not contain a minimal hereditary class of unbounded clique-width, then one can construct an infinite strictly decreasing sequence of graph classes $\C=\C_0\supset \C_1\supset \C_2\ldots$ of unbounded clique-width such that $\C_{i+1}$ is obtained from $\C_i$ by forbidding a graph $G_i\in \C_i$ as an induced subgraph. In this case, the sequence $G_0,G_1,G_2,\ldots$ creates an (unlabelled) infinite antichain in $\C$. Therefore, the conjecture of Daligault, Rao and Thomass\'e can be restricted, without loss of generality, to minimal hereditary classes of unbounded clique-width. 

Prior to this paper, only two minimal classes with unbounded clique-width were known: bipartite permutation graphs and unit interval graphs~\cite{lozin:minimal-classes:}. In both cases, they also contain a canonical (unlabelled) infinite antichain. When combined with analogous results for the two classes we have considered in this paper, we propose the following stronger conjecture.

\begin{conj}\label{conj-canonical}
Every minimal hereditary class of graphs of unbounded clique-width contains an infinite set of graphs that forms a canonical labelled infinite antichain that uses at most two incomparable labels.
\end{conj}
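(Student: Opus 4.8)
The statement is a conjecture, so what follows is a proposed line of attack rather than a complete argument. The plan is to mimic, at a general level, the three-move template used in this paper for bichain graphs: first, exhibit in an arbitrary minimal class $\C$ of unbounded clique-width a distinguished family of $n$-universal graphs $\{U_n\}_{n\ge 1}$; second, two-colour the ``boundary'' of each $U_n$ to obtain a labelled family $\{U_n^\circ\}$ and exploit rigidity of the embeddings $U_m\hookrightarrow U_n$ (the role played by the induced copies of $2K_2$ in $S_k$, or by Lemma~\ref{lem:X-embeddings} for $X$-grids) to conclude that $\{U_n^\circ\}$ is a labelled infinite antichain; third, show this antichain is canonical by proving that any hereditary subclass $\D\subseteq\C$ meeting $\{U_n\}$ only finitely often is labelled well-quasi-ordered. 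For this last step one would replay the argument of Section~\ref{sec:canonical}: reduce to graphs that are ``prime'' with respect to the join-like operations natural to $\C$ using a theorem in the spirit of Theorem~\ref{thm:canonical}, then argue that the prime members of $\D$ embed in some $U_n$ with a bounded ``width'' parameter and hence are $t$-letter graphs for a fixed $t$, and finally invoke Theorem~\ref{thm:letter11}.

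The conceptual heart is the first move: one needs a structure theorem for minimal classes of unbounded clique-width, which at present does not exist. What is required is that unboundedness of rank-width inside such a $\C$ be witnessed by a single linearly-ordered grid-like family $\{U_n\}$ — an analogue for rank-width and clique-width of the excluded-grid theorem for treewidth — together with the fact that, once the $U_n$ are $n$-universal in $\C$, minimality automatically kills all but finitely many of them upon forbidding any $H\in\C$ (an $H$ on $k$ vertices already cannot lie in a proper subclass containing $U_k$). One might then try to leverage the correspondence between clique-width and rank-width used in Section~\ref{sec:main-j}, classify the ``linear'' obstructions to bounded rank-width, and hope they are all locally equivalent to bipartite-permutation-type grids, so that a pivoting reduction in the style of Lemma~\ref{lem:pivotXY} applies. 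The known examples — bipartite permutation graphs, unit interval graphs, bichain graphs, split permutation graphs and complements of bichain graphs — all submit to this pattern, which is the evidence behind the conjecture.

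The main obstacle is precisely this first move. There is no known description of the minimal classes of unbounded clique-width; it is not even known whether there are finitely or infinitely many, and for a general such class it is not known that $n$-universal graphs exist at all. Moreover Conjecture~\ref{conj-canonical} implies the still-open special case of the Daligault--Rao--Thomass\'e conjecture asserting that a hereditary class that is well-quasi-ordered under labellings from a two-element antichain has bounded clique-width: if $\{U_n^\circ\}$ is canonical then every proper subclass of $\C$ avoiding it cofinitely is labelled well-quasi-ordered, while $\C$ itself is not, since it contains $\{U_n^\circ\}$. Hence a proof of Conjecture~\ref{conj-canonical} cannot be easier than a proof of that dichotomy. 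A realistic programme is therefore two-stage: first establish the Daligault--Rao--Thomass\'e dichotomy, presumably via such a grid theorem for rank-width, producing a two-labelled infinite antichain in every minimal class of unbounded clique-width; then upgrade ``contains such an antichain'' to ``contains a canonical one'' by showing the grid family supplied by the dichotomy is, up to finite modification, the unique obstruction to bounded clique-width inside $\C$, exactly as the graphs $S_k$ turned out to be for the bichain graphs.
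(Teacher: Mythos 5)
This statement is Conjecture~\ref{conj-canonical}, which the paper poses and deliberately leaves open; the paper contains no proof of it, only supporting evidence, namely that the four known minimal classes of unbounded clique-width (bipartite permutation graphs, unit interval graphs, and the two classes treated here, together with complements of bichain graphs) each contain a canonical labelled infinite antichain on two incomparable labels. So there is no argument of the authors to compare yours against, and your submission --- which you yourself frame as a programme rather than a proof --- cannot be assessed as a correct proof of the statement. What it does do is faithfully abstract the paper's three-move template (an $n$-universal grid-like family, a two-colouring of its boundary made rigid by an embedding lemma in the spirit of Lemma~\ref{lem:X-embeddings} or the $2K_2$-rigidity of the graphs $S_k$, and a letter-graph/prime-decomposition argument via Theorems~\ref{thm:letter11} and~\ref{thm:canonical}), and your observation that the conjecture implies the two-label Daligault--Rao--Thomass\'e dichotomy agrees with the paper's own remark that Conjecture~\ref{conj-canonical} is a strengthening of it; so no route through the paper's results can make your plan easier than that open dichotomy.

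The genuine gap is the one you name: the first move has no known substitute for an arbitrary minimal class $\C$. Nothing guarantees that unbounded rank-width inside $\C$ is witnessed by a single linearly ordered family $\{U_n\}$ of $n$-universal graphs --- indeed it is not known that universal graphs exist in a general minimal class, nor is there any classification (or even a finiteness/infiniteness statement) for minimal classes of unbounded clique-width, nor a rank-width analogue of the excluded-grid theorem of the kind your reduction would need. Without that, the second and third moves have nothing to act on, and the pivoting reduction of Lemma~\ref{lem:pivotXY} has no general analogue to invoke. One further caution about the third move: even granted a universal family, canonicity requires that \emph{every} hereditary subclass meeting the antichain finitely be labelled well-quasi-ordered, and the paper's proof of this for bichain graphs leans on class-specific structure (Lemma~\ref{lem:nogaps} and Lemma~\ref{lem:induced-sk}, i.e.\ that canonically prime $S_k$-free graphs live in a $Z$-grid of bounded width); there is no general principle ensuring that the obstruction family supplied by a hypothetical grid theorem is, up to finite modification, the unique obstruction to labelled well-quasi-ordering within $\C$. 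Your two-stage programme is a reasonable reading of the evidence, but both stages are open.
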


Note that dropping the requirement for \emph{labelled} well-quasi-ordering is already known to be false: it was recently shown in~\cite{lozin:wqo-vs-cw} that there exists a class of graphs that has unbounded clique-width, but which is well-quasi-ordered with respect to the induced subgraph relation. This class, however, contains an infinite antichain that uses two labels.

\bibliographystyle{acm}
\bibliography{../refs}

\end{document}